\theoremstyle{plain}
  \newtheorem{theorem}{Theorem}[section]
  \theoremstyle{definition}
\newtheorem{remark}[theorem]{Remark}
\pgfplotsset{compat=1.11}
\newcommand{\subjclass}[1]{\textbf{MSC2010:} \ #1}
\newcommand{\keywords}[1]{\textbf{Keywords:} \ #1} 
\newcommand{\acknowledgement}[1]{\bigskip \textbf{Acknowledgement.} \ #1}
\renewcommand{\vec}[1]{\underline{#1}}
\NewDocumentCommand{\mat}{mo}{%
  \IfValueTF{#2}{%
    \underline{\underline{#1}}{#2}
  }{%
    \underline{\underline{#1}}\,
  }%
}
\newcommand{\scp}[2]{\left\langle{#1,\, #2}\right\rangle}
\renewcommand{\d}{\operatorname{d}}
\newcommand{\fnum}{f^{\mathrm{num}}}
\newcommand{\vecfnum}{\vec{f}^{\mathrm{num}}}
\renewcommand{\epsilon}{\varepsilon}
\renewcommand{\phi}{\varphi}
\newcommand{\R}{\mathbb{R}}
\newsavebox{\DelimiterBox}
\newlength{\DelimiterHeight}
\newlength{\DelimiterDepth}
\newsavebox{\ArgumentBox}
\newlength{\ArgumentHeight}
\newlength{\ArgumentDepth}
\newlength{\ResizedDelimiterHeight}
\title{Smooth and compactly supported viscous sub-cell shock capturing for Discontinuous Galerkin 
methods}
\author{ 
  J. Glaubitz\thanks{%
    \textit{Corresponding author:} Jan Glaubitz \newline 
    Max-Planck-Institut f\"ur Mathematik, Vivatsgasse 7, 53111 Bonn, Germany. \newline
    Institut Computational Mathematics, TU Braunschweig, Universit\"atsplatz 2, 38106 Braunschweig, 
Germany. \newline
    ( \href{mailto:j.glaubitz@tu-bs.de}{j.glaubitz@tu-bs.de} )
  }, \
  A.C. Nogueira Jr., 
  J.L.S. Almeida, 
  R.F. Cant\~ao, and
  C.A.C. Silva
}
\date{\monthyeardate\today}
\begin{document}

\maketitle

\begin{abstract}
  In this work, a novel artificial viscosity method is proposed using smooth and compactly supported viscosities. 
These are derived by revisiting the widely used piecewise constant artificial viscosity method of Persson and Peraire as well as the piecewise linear refinement of Kl\"ockner et al. with respect to the fundamental design criteria of conservation and entropy stability. 
Further investigating the method of modal filtering in the process, it is demonstrated that this strategy has inherent shortcomings, which are related to problems of Legendre viscosities to handle shocks near element boundaries. 
This problem is overcome by introducing certain functions from the fields of robust reprojection and 
mollififers as viscosity distributions. 
To the best of our knowledge, this is proposed for the first time in this work. 
The resulting \emph{$C_0^\infty$ artificial viscosity method} is demonstrated to provide sharper profiles, steeper  gradients and a higher resolution of small-scale features while still maintaining stability of the method. 

\end{abstract}

\subjclass{35L65, 65N30, 65N35, 35L67, 35D40}

\keywords{hyperbolic conservation laws, Euler equations, Discontinuous Galerkin methods, 
shock capturing, artificial viscosity}

\section{Introduction}
\label{sec:introduction}

In the last decades, great efforts have been made to develop accurate and stable numerical methods for time dependent partial differential equations (PDEs), especially for hyperbolic conservation laws. 
Traditionally, low order numerical schemes, have been used 
to solve hyperbolic conservation laws, in particular in industrial applications.  
But since they become quite costly for high accuracy or long time simulations, there is a rising 
demand for high order methods.
Such high order methods, like flux reconstruction \cite{huynh2007flux,vincent2011new} or several 
Discontinuous Galerkin schemes \cite{hesthaven2007nodal,gassner2013skew,glaubitz2017novel}, 
typically use polynomial approximations to the solution. 
At least for smooth solutions, they are capable of reaching spectral orders of accuracy.
Yet, special care has to be taken to the fact that solutions of hyperbolic conservation laws might form spontaneous discontinuities. 
Due to the Gibbs phenomenon, polynomial approximations of jump functions typically show spurious oscillations and yield the underlying numerical scheme to break down.
While low-order methods add too much dissipation to the numerical solution, hence smearing 
smaller scaled features, high-order methods add too little. 
Therefore, in recent hp-methods, the idea is to add artificial dissipation in elements where discontinuities arise. 
Such procedures are known as \emph{shock capturing techniques}.

In this work, a novel artificial viscosity method is proposed for sub-cell shock capturing in 
Discontinuous Galerkin and related methods. 
This new artificial viscosity method, referred to as the \emph{$C_0^\infty$ artificial viscosity 
method}, essentially relies on the idea to replace commonly used viscosities in the artificial 
viscosity method \cite{persson2006sub,klockner2011viscous} by certain weight functions from the 
field of robust reprojection \cite{gelb2006robust} and mollifiers \cite{tadmor2002adaptive}. 

Further, the novel $C_0^\infty$ artificial viscosity method is derived by revisiting the most commonly used existing viscosity methods, such as the piecewise constant artificial viscosity method of Persson and Peraire \cite{persson2006sub} as well as the piecewise linear method of Kl\"ockner et al. \cite{klockner2011viscous}, with regard to the fundamental design criteria of conservation and entropy stability. 
To the best of the authors' knowledge, this is the first work to investigate these methods in a 
strictly analytical sense.
In the process, we are able to pinpoint certain drawbacks of these methods, formulate precise 
criteria on the viscosity terms for certain properties to hold, and thus construct novel ones with 
favorable properties. 

It should be noted that the related strategy of modal filtering is addressed, which is a 
widely used tool 
\cite{gottlieb2001spectral,boyd2001chebyshev,hesthaven2008filtering,glaubitz2016application}, since 
it promises to be somewhat of a more efficient and easy to implement 
formulation of the artificial viscosity method. 
We will show, however, that modal filtering has inherent drawbacks which can not be overcome. 
This will be demonstrated by showing that modal filtering (at least for exponential filters) 
corresponds to specific Legendre or more general Jacobi artificial viscosities which again are 
highlighted to perform poor when shock discontinuities form near element boundaries.
To the best of our knowledge, this is the first time the whole class of modal filtering (by exponential filters) is exposed to hold such a shortcoming. 

The rest of this work is organised as follows. 
In section \ref{sec:discretisation} a short description of the Discontinuous Galerkin method is 
given.
Section \ref{sec:design} exposes the essential design criteria of conservation and entropy stability for hyperbolic conservation laws on which the subsequent theoretical investigation of different artificial viscosity methods are based. 
The artificial viscosity method is then introduced in section \ref{sec:motivation} and the most widely used versions of Persson and Peraire as well as of Kl\"ockner et al. are revisited.  
In this section it is further proved that conservation holds for the viscosity extension of a conservation law, once the artificial viscosity is continuous and compactly supported. 
In section \ref{sec:modal} this discussion is extended to entropy stability of viscosity extensions, 
where it is shown that the artificial viscosity in addition has to be nonnegative for entropy 
stability to carry over. 
Moreover pinpointing the crucial drawbacks of modal filtering, we utilize the acquired design criteria for artificial viscosity terms to propose novel ones.
Section \ref{sec:sensor} introduces the shock sensor which will be used for the subsequent numerical tests in order to steer the location and strength of dissipation added by the novel artificial viscosity method. 
Numerical tests demonstrating the performance of the proposed novel strategies - also in comparison with the commonly used one of Kl\"ockner et al. - are provided in section \ref{sec:tests} for the system of Euler equations. 
The work closes in section \ref{sec:summary} by summarizing the characteristic features
of the proposed new artificial viscosity methods and discussing possible future applications.
\section{Discretization}
\label{sec:discretisation}

Traditionally, the DG approach combines ideas from Finite Volume (FV) and Spectral Element Methods (SEM). 
When space and time are decoupled by the method of lines \cite{leveque2002finite}, the DG method is designed as a semidiscretization of hyperbolic systems of conservation laws, 
\begin{equation}
    \partial_t u + \nabla \cdot F(u) = 0,
\end{equation}
with appropriate initial and boundary conditions on a physical domain $\Omega$, which is decomposed into $I$ disjoint, face-conforming elements $\Omega_i$, $\Omega = \bigcup_{i=1}^I \Omega_i$.

The DG method in one space dimension uses a nodal or modal polynomial basis of order $p$ in a reference element $\Omega_{ref} = [-1,1]$. 
The $I$ elements are mapped to this reference element, where all computations are performed then. 
The extension to multiple dimensions can be achieved by tensor products. 
Thus, in the following, just this one dimensional case is briefly revised. 
For a more detailed as well as more general description of DG schemes, see for instance the works 
\cite{hesthaven2007nodal,karniadakis2013spectral} and references cited therein. 

In one space dimension, the DG approach uses polynomial approximations $u_p,f_p \in \mathbb{P}_p([-1,1])$ to the solution $u(t,x)$ respectively the flux $f(u)$ at time $t$. 
It is constructed by the approach of the residual 
\begin{equation}
    \mathcal{R}_p(t,x) := \partial_t u_p + \partial_x f_p 
\end{equation}
to vanish in the sense that it is orthogonal to all local test functions $\psi \in \mathbb{P}_p([-1,1])$, i.e. 
\begin{equation}
    \int_{-1}^1 \mathcal{R}_p(t,x) \psi(x) \d x = 0 
    \quad \forall \psi \in \mathbb{P}_p([-1,1]).
\end{equation}
By applying integration by parts once, the locally defined weak form 
\begin{equation} 
    \int_{-1}^1 \left( \partial_t u_p \right) \psi(x) \d x - \int_{-1}^1 f_p \left( \partial_x \psi(x) \right) \d x
    = - \left( \fnum_R \psi(1) - \fnum_L \psi(-1) \right)
\end{equation}
is recovered, and by applying integration by parts a second time, the locally defined strong form 
\begin{equation} \small
    \int_{-1}^1 \left( \partial_t u_p + \partial_x f_p \right) \psi(x) \d x
    = \left( f_p(1) - \fnum_R \right) \psi(1) - \left( f_p(-1) - \fnum_L \right) \psi(-1)
\end{equation}
arises. 
Here, $\fnum$ is a suitably chosen numerical flux.
Representing the numerical solution $u_p$, the flux (reconstruction) $f_p$, and the test functions $\psi$ all with respect to the same basis $\{ \phi_k \}$,  
\begin{equation}
    u_p(t,x) = \sum_{k=0}^p \hat{u}_k(t) \phi_k(x), \quad 
    f_p(x) = \sum_{k=0}^p \hat{f}_k \phi_k(x), \quad
    \psi(x) = \sum_{j=0}^p \hat{\psi}_j \phi_j(x),
\end{equation}
$p+1$ equations
\begin{equation}\label{eq:DG} 
  \begin{aligned}
    \sum_{k=0}^p & \int_{-1}^1 \left( \partial_t \hat{u}_k(t) \phi_k(x) + \hat{f}_k \partial_x 
\phi_k(x) \right) \phi_j(x) \d x \\ 
    & = \sum_{k=0}^p \left( \hat{f}_k \phi_n(1) - \fnum_R \right) \phi_j(1) - 
    \left( \hat{f}_k \phi_k(-1) - \fnum_L \right) \phi_j(-1),
  \end{aligned}        
\end{equation}
for $j=0,\dots,p$, are obtained for the $p+1$ unknown coefficients $\hat{u}_k$ of the numerical solution on each element. 

By utilizing the mass, stiffness, differentiation, restriction (interpolation), and boundary
integral matrices
\begin{align}
    \mat{M}_{jk} & = \int_{-1}^1 \psi_j \psi_k \d x, \quad 
    \mat{S}_{jk} = \int_{-1}^1 \psi_j \partial_{x} \psi_k \d x, \quad
    \mat{D} = \mat{M}^{-1} \mat{S}, \\
    \mat{R} & = 
    \begin{pmatrix} 
        \phi_0(1) & \dots & \phi_p(1) \\ \phi_0(-1) & \dots & \phi_p(-1) 
    \end{pmatrix}, \qquad 
    \mat{B} = \begin{pmatrix} 
        -1 & 0 \\ 0 & 1 
    \end{pmatrix},
\end{align}
the DG projection \eqref{eq:DG} can be rewritten in its matrix form 
\begin{equation}
    \partial_t \vec{u} = - \mat{D} \vec{f} + \mat{M}^{-1} \mat{R}^T \mat{B} \left( \mat{R} \vec{f} - \vecfnum \right).
\end{equation}
Here, $\vec{u},\vec{f}$ are the vectors containing the coefficients of $u_p, f_p$ and $\vecfnum$ is the vector containing the values of the numerical flux at the element boundaries.

The resulting system of ordinary differential equations for the solution coefficients $\vec{u}$, is 
integrated in time using for instance a Runge-Kutta method then. 
See the extensive literature \cite{gottlieb1998total,levy1998semidiscrete,gottlieb2001strong,ketcheson2008highly,gottlieb2011strong}. 
In this work, the later numerical tests are obtained by the explicit strong stability preserving (SSP) Runge-Kutta (RK) method of third order using three stages (SSPRK(3,3)) or of forth order using five stages (SSPRK(4,5)) given by Gottlieb and Shu in \cite{gottlieb1998total} and Hesthaven and Warburton in \cite{hesthaven2007nodal}, respectively.
\section{Design principles and equations}
\label{sec:design}

When space and time are decoupled by the method of lines, numerical methods for hyperbolic conservation laws $\partial_t u = - \partial_x f(u)$ are essentially designed as semidiscretisations of the right hand side $- \partial_x f(u)$. 
In the DG approach described in section \ref{sec:discretisation}, this semidiscretisation is given 
by 
\begin{equation}
  - \mat{D} \vec{f} + \mat{M}^{-1} \mat{R}^T \mat{B} \left( \mat{R} \vec{f} - \vecfnum \right).
\end{equation} 
But how does one construct such a semidiscretisation? 
Besides consistency with the underlying differential equation, numerical methods typically aim to mimic certain properties of the (unknown) analytical solution. 
In this work, the focus will be given to the very essential properties of conservation and entropy stability as design principles for numerical schemes. 
In fact, a broad class of artificial viscosity terms for sub-cell shock capturing methods 
will be 
rejected, since they already violated conservation. 
Furthermore, every hyperbolic conservation law provides its very own entropy (criterion), 
which is then used to select solutions of physical significance.  
In this section, the most important equations and associated entropy functions are briefly revisited.

\subsection{Design principles} 

Conservation laws 
\begin{equation}\label{eq:scalar-CL}
    \partial_t u + \partial_x f(u) = 0
\end{equation}
ensure that the rate of change of the total amount of a particular measurable property $u$ in a fixed domain $\Omega$ is equal to the flux of that property across the boundary of $\Omega$, i.e. 
\begin{equation}\label{eq:conservation}
  \od{}{t} \int_\Omega u
  = - f(u) \big|_{\partial \Omega}.
\end{equation}

Recalling \cite{lax1973hyperbolic} that a solution of \eqref{eq:scalar-CL} may develop spontaneous jump discontinuities (shock waves) in finite time and even for smooth initial condition, the more general class of weak solutions  has to be permitted. 
However, since there are many possible weak solutions then, the one of physical significance must be selected. 
This is done by augmenting the conservation law \eqref{eq:scalar-CL} with an additional entropy condition which requires 
\begin{equation}\label{eg:entropy-eq}
    \partial_t U(u) + \partial_x F(u) \leq 0.
\end{equation}
$U(u)$ and $F(u)$ are any strictly convex entropy function and corresponding entropy-flux associated with the conservation law \eqref{eq:scalar-CL} in the sense that they have to satisfy $U'(u) f'(u) = F'(u)$. 
A strict inequality in \eqref{eg:entropy-eq} reflects the existence of physically relevant shock 
waves in the solution of the system \eqref{eq:scalar-CL}, \eqref{eg:entropy-eq} then. 
Similar to the associated conservation law, the entropy condition \eqref{eg:entropy-eq} ensures that the rate of change of the total amount of entropy $U$ in a fixed domain $\Omega$ is equal or less to the entropy-flux across the boundary of $\Omega$, i.e. 
\begin{equation}\label{eq:stability}
  \od{}{t} \int_\Omega U(u)
  \leq - F(u) \big|_{\partial \Omega}.
\end{equation}
Especially for periodic boundary conditions, the two mayor design principles,
\begin{align}
    \text{conservation:} \qquad & \od{}{t} \int_\Omega u = 0, \label{eq:design_con} \\ 
    \text{entropy stability:} \qquad & \od{}{t} \int_\Omega U(u) \leq 0, \label{eq:design_ent}
\end{align}
which will be further utilised in section \ref{sec:motivation} and \ref{sec:modal}, arise.

\subsection{Linear advection and Euler equations}

At the very simple end of the spectrum of hyperbolic conservation laws lies the \emph{linear 
advection equation}, 
\begin{equation}\label{eq:linear_advection}
    \partial_t u + \partial_x u = 0,
\end{equation}
with constant velocity. 
Equation \eqref{eq:linear_advection} transports its initial condition in time with speed $1$. 
Thus, when the equation is augmented with an initial condition $u_0(x)$, the exact solution is simply given by $u(t,x) = u_0(x-t)$. 
For the linear advection equation, one has $f'(u) = 1$ and hence $U'(u) = F'(u)$ for a pair $(U,F)$ of a strictly convex entropy $U$ and an entropy-flux $F$. 
One such pair associated to the linear advection equation, is therefore already given by 
\begin{equation}\label{eq:entropy_advection}
    U(u) = F(u) = \frac{1}{2} u^2,
\end{equation} 
which is the so called \emph{$L^2$ entropy}. 
In fact, for scalar conservation laws, any strictly convex function $U$ is an entropy 
\cite{godlewski1991hyperbolic}.
Note that it is much more difficult to find an entropy $U$ in the general case of systems. 
In fact, the existence of entropy functions is a special property of the system. 
However, in all practical examples derived from Mechanics or Physics, finding an entropy which has a physical meaning is possible. 

Such a practical example are the \emph{Euler equations} 
\begin{equation}\label{eq:euler}
    \partial_t 
    \underbrace{\begin{pmatrix} \rho \\ m \\ E \end{pmatrix}}_{=u} 
    + \partial_x 
    \underbrace{\begin{pmatrix} m \\ vm + \rho P \\ v (E + P) \end{pmatrix}}_{=f(u)}
    = 0,
\end{equation}
of gas dynamics. 
Here, $\rho$ is the density, $v$ is the velocity, $E$ is the total energy, $m = q \rho$ is the momentum, $P = (\gamma - 1)(E-\frac{1}{2}v^2 \rho)$ is the pressure, and $\gamma$ is the ratio of specific heats. 
Euler equations broadly apply to compressible, inviscid flow problems.
An entropy-entropy flux pair $(U,F)$ associated to the Euler equations \eqref{eq:euler} is given by 
\begin{equation}\label{eq:entropy_euler}
    U = - \rho s, \qquad 
    F = - \rho s u,
\end{equation}
where $s$ is the physical entropy
\begin{equation*}
    s = \ln(P \rho^{-\gamma}) + C = \ln(P) - \ln(\rho^\gamma) + C
\end{equation*}
with constant term $C \in \R$, which can be ignored.
In fact, there are many possible choices for the entropy function, such as $U = \rho \mu(s)$ for any convex function $\mu$, see \cite{harten1983symmetric}. 
The above choice, however, is the only one which is consistent with the entropy condition from thermodynamics \cite{hughes1986new} in the presence of heat transfer. 
\section{\texorpdfstring{Motivation for $C^0$ artificial viscosity}{Motivation for C0 artificial 
viscosity}}
\label{sec:motivation}

Spontaneous development of jump discontinuities in solutions of hyperbolic conservation laws is not only a challenge from a theoretical point of view but also from a numerical. 
In addition to selecting physical significant solutions, spurious oscillations arise in the numerical solution when shock waves are present. 
Especially high order schemes, where the 'wiggles' stem from the Gibbs phenomenon 
\cite{hewitt1979gibbs}, often become unstable and might finally break down. 
This is illustrated in Figure \ref{fig:linear_SSPRK33} for the linear advection equation 
\eqref{eq:linear_advection} with a square wave as initial condition.

\begin{figure}[!htb]
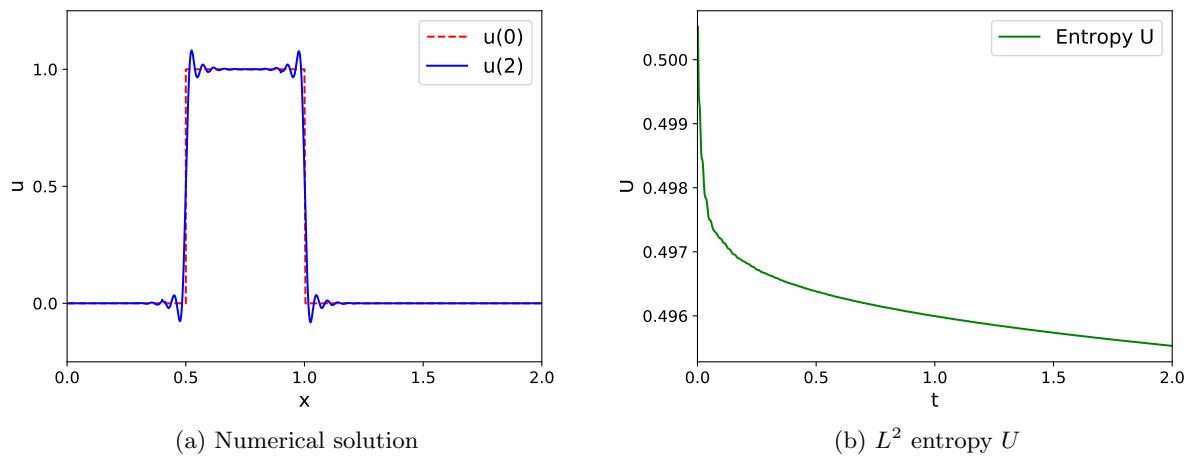

  \centering
  \begin{subfigure}[b]{0.49\textwidth}
    \includegraphics[width=\textwidth]{%
      02_sol_SSPRK33+upwind}
    \caption{Numerical solution}
    \label{fig:linear_SSPRK33_sol}
  \end{subfigure}%
  ~
  \begin{subfigure}[b]{0.49\textwidth}
    \includegraphics[width=\textwidth]{%
      02_entropy_SSPRK33+upwind}
    \caption{$L^2$ entropy $U$}
    \label{fig:linear_SSPRK33_ent}
  \end{subfigure}%
  \caption{Numerical solution and its entropy for linear advection using $N = 20$ elements with
           polynomials of degree $\leq p = 9$, $1 \ 000$ timesteps of the SSPRK(3,3) method, 
           and a full upwind numerical flux.
           }
  \label{fig:linear_SSPRK33}
\end{figure}

Further, a numerical full upwind flux $\fnum(u_-,u_+) = u_-$ and time 
integration by the third order SSPRK method using three stages - SSPRK(3,3) - by Gottlieb and Shu 
\cite{gottlieb1998total} has been used. 
It is known \cite{cockburn1989tvb} that for stability to hold the time step size $\Delta t$ should 
be bounded as $\Delta t \leq C \frac{h}{(2p+1) \lambda_{\text{max}}}$, where $\lambda_{\text{max}} 
= 1$ is the magnitude of the velocity and $h=1/I$ is the (local) mesh size.  
Using $1 \ 000$ time steps, the CFL number $C$ has been chosen as $C = 0.38$.

In \cite{kuzmin2006flux}, it is suggested that the linear advection equation is highly suited to test shock capturing schemes for several reasons:
\begin{enumerate}
    \item 
    The linear advection equation is the simplest partial differential equation that can 
feature discontinuous solutions. 
    Thus, the (shock capturing) method can be observed in a well-understood setting, isolated from nonlinear effects. 
    
    \item 
    The knowledge of the exact solution $u(t,x) = u_0(x-t)$, in particular, allows to suspend the 
    discussion of shock senors. 
    By eliminating potential shortcomings of a shock sensor, one is able to solely examine the behaviour of the shock capturing method. 
    
    \item 
    At the same time, the linear advection equation provides a most challenging example to be treated. 
    Similar to contact discontinuities in the Euler equations, discontinuities are not self-steeping. 
    Once such a discontinuity is smeared by the method, nothing will recover it to its original sharp shape.
    
\end{enumerate}

Furthermore, the $L^2$ entropy $U$ associated to \eqref{eq:linear_advection} remains constant for 
the exact solution.
While the entropy stable numerical flux introduces dissipation at the element boundaries, 
and thus in the spatial semidiscretization, the SSPRK method does in the time integration.
As a consequence, the entropy in Figure \ref{fig:linear_SSPRK33_ent} slightly 
decreases.
Both mechanisms alone, entropy stable (dissipative) numerical fluxes and time integration, have 
serious drawbacks however.
First, ($L^2$) stability of SSPRK schemes is only ensured when the simple forward Euler 
method $u^{n+1} = u^n + \Delta t \partial_t u^n$ already provides ($L^2$) stability 
\cite{gottlieb2001strong}. 
Yet, the forward Euler method is most often not stable, since
\begin{equation}
  \norm{u^{n+1}}^2 
    = \norm{u^n}^2 + 2 \Delta t \text{Re} \scp{u^n}{\partial_t u^n} + \left( \Delta t \right)^2 
      \norm{\partial_t u^n}^2, 
\end{equation}
where the second term might be estimated due to a stable semidisretisation, while the last term is 
non-negative and might render the forward Euler method unstable. 
See 
\cite{gottlieb1998total,levy1998semidiscrete,gottlieb2001strong,ketcheson2008highly,ranocha2016time} 
for more details on SSP time discretisation methods.
Second, and even more serious, are the shortcomings of shock capturing just by dissipative 
numerical fluxes. 
When dissipation is added just at the element boundaries, no further resolution can be 
obtained near shock discontinuities when the polynomial degree is increased. 
The same holds for increasing the number of elements, 
while keeping the polynomial degree fixed\footnote{which would also contradict the idea of high 
order methods}. 
Both cases are illustrated in Figure \ref{fig:linear_resolution}. 
Sub-cell resolution can neither be enhanced by increasing the polynomial degree nor by refining the mesh. 

\begin{figure}[!htb]
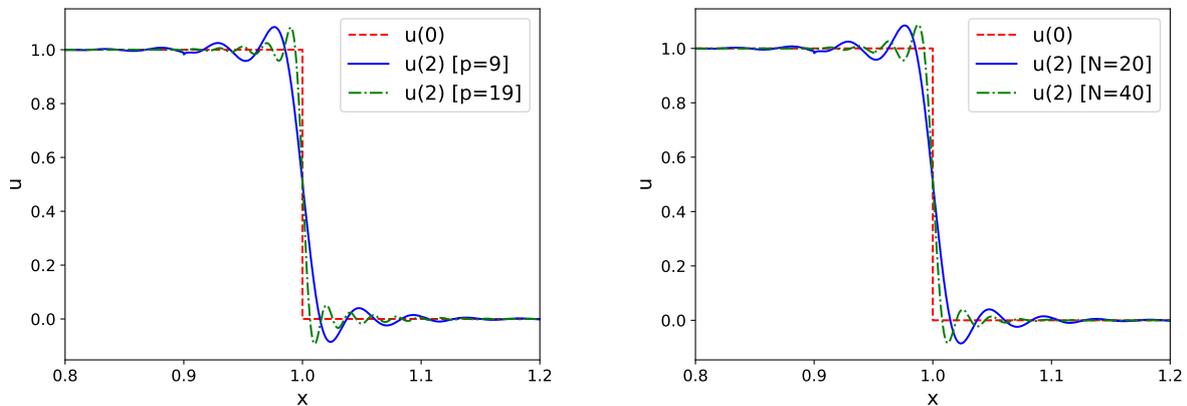

  \centering
  \begin{subfigure}[b]{0.49\textwidth}
    \includegraphics[width=\textwidth]{%
      03_resolution_degree}
    \caption{Solutions for polynomial degrees $p = 9,19$ and $N=20$ elements}
    \label{fig:linear_resolution_degree}
  \end{subfigure}%
  ~
  \begin{subfigure}[b]{0.49\textwidth}
    \includegraphics[width=\textwidth]{%
      04_resolution_elements}
    \caption{Solutions for polynomial degree $p=9$ and $N = 20,40$ elements}
    \label{fig:linear_resolution_elements}
  \end{subfigure}%
  \caption{Numerical solutions for linear advection using $10 \ 000$ timesteps.
           }
  \label{fig:linear_resolution}
\end{figure}

In both cases, the spurious oscillations are just closer to the discontinuity but will not vanish. 
A common approach therefore is to identify the elements lying in the shock region and to reduce the order of approximation there, 
see for instance \cite{baumann1999discontinuous,burbeau2001problem} and references therein.
Since this increases inter-element jumps and thus the amount of dissipation naturally added by the DG method, at latest for piecewise constant approximations, the method should be able to handle any shock. 
Yet, it should be stressed that decreasing the order of approximation is equivalent to adding dissipation proportional to $\mathcal{O}(\Delta x)$. 
Clearly, the accuracy will be reduced.
Thus, a widely accepted observation is that the solution, in fact, can be at most first order accurate near shocks. 
A common idea to bypass this problem is to adaptively refine the mesh in regions of discontinuity. 
Shocks, however, are lower dimensional objects and strongly anisotropic. 
An effective strategy for mesh adaptation therefore needs to incorporate some degree of directionality, especially in three dimensions. 
See for instance \cite{dervieux2003theoretical} and references therein.

In this work a more simple approach will be exploited. 
Sub-cell resolution will be enhanced by an improvement of the artificial viscosity method, 
\begin{equation}
    \partial_t u + \partial_x f(u) = 0 
    \quad \mapsto \quad  
    \partial_t u + \partial_x f(u) = \varepsilon \ \partial_{xx} u, 
    \quad \varepsilon > 0.
\end{equation}

\subsection{Vanishing and artificial viscosity}

Originally, the idea of artificial viscosity stems from the vanishing viscosity method to show existence of entropy solutions, see \cite{lax1973hyperbolic}. 
There, entropy solutions are constructed as $L^1$ limits of solutions $u_\varepsilon$ of the parabolic equation 
\begin{align}
    \partial_t u_\varepsilon + \partial_x f(u_\varepsilon) & = \varepsilon \ \partial_{xx} u_\varepsilon, 
    \quad \varepsilon > 0, \\ 
    u_\varepsilon(0,x) & = u_0(x),
\end{align}
for $\varepsilon \to 0$. 
Thus, another way to characterise physically realizable solutions is to identify them as limits of solutions of equations in which a small dissipative mechanism has been added. 

In their pioneering work \cite{vonneumann1950method}, von Neumann and Richtmyer revised this idea to construct stable FD schemes for the equations of hydrodynamics by including artificial viscosity terms. 
As they pointed out, when viscosity is taken into account, shocks are seen to be smeared out, so that the mathematical surfaces of discontinuity are replaced by thin layers in which pressure, density, temperature, etc. vary rapidly but continuously. 
The overall concept is to approximate (discontinuous) entropy solutions by smoother solutions of a 
parabolic equation and to apply the numerical method to this new equation, where shocks are 
now replaced by thin but continuous layers. 
Often, the smooth approximation $u_\varepsilon$ is also called a \emph{viscose profile} to the entropy solution $u$.

\subsection{A local artificial viscosity}

Considering a viscose profile over the whole domain $\Omega$, shocks might be spread over several elements if not even the whole domain. 
Furthermore, also other (especially small-scale) features of the original solution away from shocks typically get smeared by a global artificial viscosity, i.e. $\varepsilon = const$ on $\Omega$. 
Breaking new ground in \cite{persson2006sub}, Persson and Peraire therefore proposed a local artificial viscosity in the framework of DG methods. 
By locally adapting the viscosity strength $\varepsilon$, artificial viscosity is just added in the elements where shocks arise. 
Thus, the now piecewise constant function $\varepsilon$ is chosen $\varepsilon = 0$ away from discontinuities and $\varepsilon > 0$ in elements with shocks. 
Discontinuities that may appear in the original solution will spread over layers of thickness $\mathcal{O}(\varepsilon)$ in the solution of the modified equation.
Hence, Persson and Peraire more precisely suggest that $\varepsilon$ should be chosen as a function of the resolution available in the approximating space. 
Since this resolution given by a piecewise polynomial of order $p$ and element width $h$ scales like $\frac{h}{p}$, values $\varepsilon \in \mathcal{O}\left( \frac{h}{p} \right)$ are taken near shocks. 

While there are works of Barter and Darmofal \cite{barter2010shock} as well as Klöckner, Warburton, and Hesthaven \cite{klockner2011viscous} that emphasize certain drawbacks of a piecewise constant artificial viscosity, the present work is the first to decline this approach based on a strictly theoretical analysis. 
In \cite{barter2010shock}, the authors already observed oscillations in state gradients  which pollute the downstream flow for element-to-element variations in the artificial viscosity.  
In the following however, it is demonstrated that already the very first design principle \eqref{eq:conservation} of conservation fails for a just piecewise continuous artificial viscosity. 

Decomposing the domain $\Omega$ into disjoint, face-conforming elements $\Omega_i$, one has
\begin{equation}
    \od{}{t} \int_{\Omega_i} u 
        = - \int_{\Omega_i} \partial_x f(u) + \varepsilon_i \int_{\Omega_i} \partial_{xx} u 
        = - f(u)\Big|_{\partial \Omega_i} + \varepsilon_i \partial_x u \Big|_{\partial \Omega_i}
\end{equation}
in every element, since $\varepsilon$ is piecewise constant with value $\varepsilon_i$ on the $i$-th element.
Now putting the elements together, the rate of change of the total amount of $u$ is given by 
\begin{equation}
    \od{}{t} \int_{\Omega} u 
        = - \sum_{i} f(u)\Big|_{\partial \Omega_i} + \sum_{i} \varepsilon_i \partial_x u \Big|_{\partial \Omega_i}.
\end{equation}
Assuming a sufficiently smooth solution and therefore $u_- = u_+$ at the element boundaries, this reduces to
\begin{equation}
    \od{}{t} \int_{\Omega} u 
        = - f(u)\Big|_{\partial \Omega} + \sum_{i} \varepsilon_i \partial_x u \Big|_{\partial \Omega_i}.
\end{equation}
Note that especially for periodic boundary conditions one thus has 
\begin{equation}
    \od{}{t} \int_{\Omega} u 
        = \sum_{i} \varepsilon_i \partial_x u \Big|_{\partial \Omega_i},
\end{equation}
which typically is not equal to zero and thus violates the design principle \eqref{eq:design_con} of conservation. 
Demonstrated by this analysis, a just piecewise constant artificial viscosity admittedly has some critical drawbacks and thus should be rejected.

\subsection{Recent continuous refinements: Smoothing the artificial viscosity}
\label{sub:recent}

In \cite{barter2010shock} Barter and Darmofal took up the work \cite{persson2006sub} of Persson and Peraire and further improved their ideas. 
Clear numerical shortcomings of a piecewise constant artificial viscosity are stressed in their work. 
In particular, they note that the element-to-element variations induce oscillations in state gradients, which pollute the downstream flow. 
While numerical oscillations are damped, oscillations remain in the derivative $\partial_x u$. 
Even though they seem to miss the violation of conservation, they clearly point out the missing 
smoothness of the viscosity as the crucial problem. 
Thus, in \cite{barter2010shock} a smooth artificial viscosity was developed by employing an artificial viscosity PDE model which is appended to the system of governing equations. 
Effectively, their idea to enhance smoothness of the artificial viscosity is "diffusing the diffusivity". 

Without separately discussing the drawbacks of this particular approach (which will be stressed later), we now analyse the consequences of a 'sufficiently smooth' artificial viscosity. 
Working with the more general viscosity term $\left( \partial_x \varepsilon(x) \partial_x \right) u$ this time, 
an analogous analysis to the one before yields 
\begin{equation}
    \od{}{t} \int_{\Omega} u 
        = - f(u)\Big|_{\partial \Omega} + \sum_{i} \varepsilon(x) \partial_x u \Big|_{\partial \Omega_i} 
        = - f(u)\Big|_{\partial \Omega} + \varepsilon(x) \partial_x u \Big|_{\partial \Omega}.
\end{equation}
Again assuming a periodic behaviour, now also for $\varepsilon$, this time 
\begin{equation}
    \od{}{t} \int_{\Omega} u 
        = 0,
\end{equation}
arises, which in fact satisfies the design principle \eqref{eq:design_con} of conservation. 
Furthermore, the general design principle of conservation for arbitrary boundary conditions \eqref{eq:conservation} can be satisfied by enforcing a compactly supported viscosity $\varepsilon(x)$ on $\Omega$. 
By noting that in the above analysis, a continuous viscosity $\varepsilon(x)$ is already 
'sufficiently smooth', the following Theorem is immediately proven.

\begin{theorem}\label{thm:conservation}
    Augmenting a conservation law $\partial_t u + \partial_x f(u) = 0$ with a continuous and on $\Omega$ compactly supported artificial viscosity $\left( \partial_x \varepsilon(x) \partial_x \right) u$
    on the right hand side, preserves conservation. 
    I.e. 
    \begin{equation}
        \od{}{t} \int_{\Omega} u 
        = - f(u)\Big|_{\partial \Omega}
    \end{equation}
    holds for solutions of $\partial_t u + \partial_x f(u) = \left( \partial_x \varepsilon(x) \partial_x \right) u$.
\end{theorem}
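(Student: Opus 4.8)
The plan is to verify the conservation identity directly by integrating the augmented equation over the whole domain and then exploiting the two hypotheses on $\varepsilon$ separately. Since the preceding discussion has already carried out the essential computation for the general viscosity term $\left( \partial_x \varepsilon(x) \partial_x \right) u$, the proof mainly amounts to organising these observations cleanly and making explicit which property of $\varepsilon$ is used where.

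First I would integrate $\partial_t u + \partial_x f(u) = \left( \partial_x \varepsilon(x) \partial_x \right) u$ over $\Omega = \bigcup_i \Omega_i$ and interchange the time derivative with the spatial integral, obtaining
\begin{equation}
  \od{}{t} \int_\Omega u = - \int_\Omega \partial_x f(u) + \int_\Omega \left( \partial_x \varepsilon(x) \partial_x \right) u.
\end{equation}
Both spatial integrals are then evaluated elementwise by the fundamental theorem of calculus, which yields a flux contribution $-f(u)\big|_{\partial \Omega_i}$ and a viscous contribution $\varepsilon(x) \partial_x u \big|_{\partial \Omega_i}$ on each element $\Omega_i$.

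The decisive step is the summation over the elements. For a sufficiently smooth solution one has $u_- = u_+$ at every interior interface, so the flux contributions of neighbouring elements cancel in pairs and telescope to the physical boundary value $-f(u)\big|_{\partial \Omega}$. The analogous cancellation for the viscous term is exactly where the \emph{continuity} of $\varepsilon$ enters: since $\varepsilon$ takes a single value across each interior interface, and $\partial_x u$ is likewise single-valued there for a smooth solution, the two viscous contributions meeting at an interface carry opposite orientation and cancel, leaving only $\varepsilon(x) \partial_x u \big|_{\partial \Omega}$. This is precisely the cancellation that fails for a merely piecewise constant viscosity, where the jump in $\varepsilon$ obstructs it; isolating this mechanism is the crux of the argument and the main conceptual obstacle.

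Finally I would invoke the second hypothesis, that $\varepsilon$ is compactly supported inside $\Omega$. This forces $\varepsilon$ to vanish on $\partial \Omega$, so the remaining viscous boundary term $\varepsilon(x) \partial_x u \big|_{\partial \Omega}$ is zero, and one is left with $\od{}{t} \int_\Omega u = - f(u)\big|_{\partial \Omega}$, as claimed. The obstacle here is thus not computational but structural: one must recognise that continuity governs the interior interfaces while compact support governs the physical boundary, and that these two roles of the hypotheses act independently.
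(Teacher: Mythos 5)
Your proposal is correct and follows essentially the same route as the paper: elementwise integration, telescoping of the interface contributions using continuity of $\varepsilon$ (and smoothness of $u$), and vanishing of the remaining boundary term $\varepsilon(x)\,\partial_x u\big|_{\partial\Omega}$ by compact support. The paper presents this computation in the discussion immediately preceding the theorem and declares the theorem ``immediately proven''; your write-up merely makes the two roles of the hypotheses more explicit.
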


In accordance to Theorem \ref{thm:conservation}, in \cite{klockner2011viscous} Klöckner et al. 
numerically observed that there seems to be no advantage in having viscosities $\varepsilon \in C^k$ 
for $k > 0$.\footnote{$\varepsilon \in C^k$ for $k > 0$ is required for higher order viscosities, 
for instance $\left(\partial_{xx} \nu \partial_{xx} \right) u$. Then, $\frac{\d^k 
\nu}{\d t^k} (\pm 1) = 0$ is further required for conservation.} 
Thus, they have formulated an algorithm to enforce continuity of the viscosity - as the authors think, more efficient than the one of Barter and Darmofal - by simple linear interpolation. 
Building up on a given piecewise constant viscosity, they propose the following steps:
\begin{enumerate}
    \item 
    At each vertex, collect the maximum viscosity occurring in each of the adjacent elements.
    
    \item 
    Propagate the resulting maximum back to each element adjoining vertex. 
    
    \item 
    Use a linear ($P^1$) interpolant to extend the values at the vertices into a viscosity on the entire element.
\end{enumerate}

While the above algorithm works perfectly fine in regards to enforce continuity of the artificial viscosity, it inherits a critical disadvantage, which is also shared by the approach of Barter and Damofal \cite{barter2010shock}. 
As Sheshadri and Jameson already stated in \cite{sheshadri2014shock}, 
enforcement of continuity of the artificial viscosity $\varepsilon$ across element boundaries  
increases the footprint of the added dissipation again. 
This is noticed immediately when consulting Figure \ref{fig:cont_av}, where the algorithm of Kl\"ockner et al. is illustrated for a simple example. 

\begin{figure}[!htb]
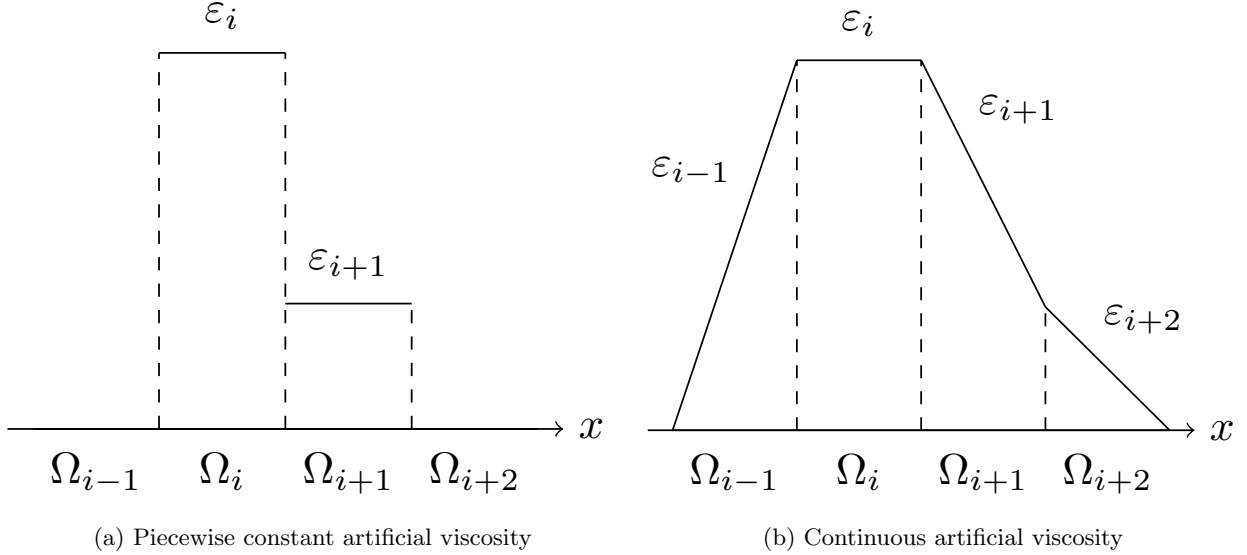

  \centering
  \begin{subfigure}[b]{0.49\textwidth}
    \includegraphics[width=\textwidth]{%
      viscosity_figure1}
    \caption{Piecewise constant artificial viscosity}
    \label{fig:piecewise}
  \end{subfigure}%
  ~
  \begin{subfigure}[b]{0.49\textwidth}
    \includegraphics[width=\textwidth]{%
      viscosity_figure2}
    \caption{Continuous artificial viscosity}
    \label{fig:continuous}
  \end{subfigure}%
  \caption{A piecewise constant viscosity and the corresponding continuous viscosity 
constructed by the algorithm of Kl\"ockner et al..
           }
  \label{fig:cont_av}
\end{figure}

First introducing local artificial viscosity to resolve shock discontinuities within a single 
element, yet increasing the footprint again by enforcing continuity, the circle closes.

\section{\texorpdfstring{Modal filtering and novel $C^\infty_0$ artificial viscosities}{Modal 
filtering and novel smooth and compactly supported artificial viscosities}}
\label{sec:modal}

When enforcing continuity of the viscosity, and therefore ensuring conservation, 
the initial localization of the artificial viscosity proposed by Persson and Peraire 
\cite{persson2006sub} is destroyed again. 
Shocks are spread over several cells and sub-cell resolution is prevented. 

Yet, another drawback arises from the artificial viscosity method. 
Augmenting the original conservation law with a parabolic term $\left( \partial_x \varepsilon \partial_x \right) u$ significantly decreases the efficiency of the numerical scheme. 
In addition to the typical time step restriction $\Delta t \sim \Delta x$ an additional restriction $\Delta t \sim \frac{\left( \Delta x \right)^2 }{ \| \varepsilon \|_\infty}$ of an explicit time step arises for the artificial viscosity extension $\partial_t u + \partial_x f(u) = \left( \partial_x \varepsilon \partial_x \right) u$. 
This second restriction is potentially more harsh and might reduce the global time step considerably. 
Note that also methods such as the local DG method of Cockburn and Shu \cite{cockburn1998local}, where the viscosity extension is rewritten as a system of first order hyperbolic equations 
\begin{equation}\label{eq:local-DG}
\begin{aligned}
    \partial_t u + \partial_x f - \partial_x q  & = 0, \\
    q - \varepsilon \partial_x u & = 0,
\end{aligned}
\end{equation}
by introducing the auxiliary flux variable $q$, just shift the problem. 
The efficiency still is reduced, now by the increased complexity of the system under consideration.

To sum up, there are two major problems which have to be addressed:
\begin{enumerate}
    \item 
    Spreading of shocks over several elements by enforcement of continuity.
    
    \item 
    Decreased efficiency of the method by introducing second (or higher) order terms of the artificial viscosity.
\end{enumerate}

The first problem will be overcome by introducing $C^\infty_0$ artificial viscosities, 
which are compactly supported on the corresponding element. 
A well-known representative of this class of viscosities is the so called Legendre artificial 
viscosity, $\partial_x \left( 1 - x^2 \right) \partial_x u $, which will be briefly revised 
in subsection \ref{subsec:Legendre}. 
This viscosity is often used since it corresponds to certain modal exponential filters. 
Using these modal filters instead of the original artificial viscosity method, the second 
problem of additional time step restrictions can also be overcome. 
Yet, in section \ref{subsec:Legendre} the Legendre artificial viscosity, and therefore modal filtering in the Legendre basis, is declined due to serious shortcomings of the corresponding viscosity function. 
To the best of the authors' knowledge, this work is the first to address this particular 
problem of the Legendre artificial viscosity and corresponding modal filters. 
By an argument of Bochner (1929), the same modal filters will even be rejected for all 
orthogonal bases of Jacobi polynomials. 

In order to fill the void that results from rejecting the state of the art viscosities, 
novel $C^\infty_0$ artificial viscosities are proposed in subsection 
\ref{subsec:novel_viscosities}. 
To the best of the authors knowledge, these viscosities are proposed for the first time.

\subsection{Legendre artificial viscosity and modal filtering}
\label{subsec:Legendre}

A commonly utilised approach to overcome Problem 1, i.e. an increased footprint of the viscosity 
caused by enforcement of continuity, is to use the \emph{Legendre viscosity} defined on the 
reference element $[-1,1]$ by 
\begin{equation}\label{eq:Legendre_AV}
    \varepsilon(x) = \varepsilon \left( 1 - x^2 \right),
\end{equation}
where $\varepsilon \in \R_0^+$ is called the \emph{viscosity strength} and $\nu(x) =\left( 1 - x^2 \right)$ is referred to as the \emph{viscosity distribution}.

Using the Legendre artificial viscosity, shocks are again resolved within a single element, 
where now conservation is satisfied by Theorem \ref{thm:conservation} and entropy stability is 
further ensured by the subsequent Theorem \ref{thm:stability}. 
The Legendre viscosity is such a widespread tool because it enables one to bypass Problem 2 
in a certain sense. 
Here, a procedure first proposed by Majda, McDonough, and Osher in \cite{majda1978fourier} as well 
as by Kreiss and Oliger in \cite{kreiss1979stability} is utilised. 
Also see the monograph \cite{gottlieb2001spectral}.

Applying a first order operator splitting in time, solving the artificial viscosity 
extension 
$\partial_t u + \partial_x f(u) = \varepsilon \left( \partial_x \nu \partial_x \right)u$ 
is divided into two steps: 
\begin{align}
    & \text{(1st) solve } \quad \partial_t u + \partial_x f(u) = 0, \label{eq:1st} \\ 
    & \text{(2nd) solve } \quad \partial_t u =  \varepsilon \left( \partial_x \nu \partial_x \right)u. \label{eq:2nd}
\end{align} 
Applying a numerical solver to this approach, going forward in time is done by alternately integrating \eqref{eq:1st} and \eqref{eq:2nd}. 
This can for instance be done by an explicit Runge-Kutta method. 
Note that in this approach additional time step restrictions are still present for the parabolic equation \eqref{eq:2nd}. 
When adjusting the artificial viscosity to the modal basis in which the numerical solution is expressed, however, equation \eqref{eq:2nd} can be solved exactly. 
The exact solution is obtained by applying a modal exponential filter to the numerical solution of 
equation \eqref{eq:1st} then.

The Legendre viscosity, in particular, is used so often, since the orthogonal Legendre polynomials $\{P_k\}$ arise as solutions of the corresponding Legendre differential equation 
\begin{equation}\label{eq:Legendre}
    \od{}{x} \left( 1 - x^2 \right) \od{}{x} P_k 
        = - \lambda_k P_k 
\end{equation}
of a Sturm-Liouville type for eigenvalues $-\lambda_k = -k(k+1)$. 
Thus, expressing the numerical solution $u_p$ with respect to the commonly used orthogonal basis of Legendre polynomials, $u_p(t,x) = \sum_{k=0}^p \hat{u}_k(t) P_k(x)$, equation \eqref{eq:2nd} reads 
\begin{equation} \small
    \sum_{k=0}^p \left(\partial_t \hat{u}_k \right)(t) \ P_k(x) 
        = \varepsilon \sum_{k=0}^p \hat{u}_k(t) \left[ \partial_x (1-x^2) \partial_x \right] P_k(x) 
        = - \varepsilon \sum_{k=0}^p \lambda_k \hat{u}_k(t) P_k(x).
\end{equation}
Thereby, the last equation follows from the eigenvalue equation \eqref{eq:Legendre} for the Legendre polynomials. 
A simple comparison of the time dependent coefficients results in a system of $p+1$ decoupled ODEs 
\begin{equation}
    \od{\hat{u}_k}{t} (t) = - \varepsilon \lambda_k \hat{u}_k(t), 
    \quad k=0,1,\dots,p,
\end{equation}
with solutions given by 
\begin{equation}
    \hat{u}_k (t) = C_k \cdot e^{- \varepsilon \lambda_k t}, \quad C_k \in \R.
\end{equation}
After one time step $\Delta t = t^{n+1} - t^n$, the coefficients $\hat{u}_k (t^{n+1})$ are 
hence given by 
\begin{equation}
    \hat{u}_k (t^{n+1}) 
        = C_k \cdot e^{- \varepsilon \lambda_k (\Delta t + t^n)} 
        = e^{- \varepsilon \lambda_k \Delta t} \cdot \hat{u}_k (t^{n}).
\end{equation}
Thus, solving the parabolic equation \eqref{eq:2nd} for the numerical solution $u_p$ is equivalent to multiplying the coefficients $\hat{u}_k$ with the function 
$\sigma(k) = e^{-\varepsilon \Delta t \lambda_k}$.
In order to speak of a modal filter for $\sigma(k) = \exp( -\varepsilon \Delta t \lambda_k )$, the exponent has to be rewritten as 
\begin{equation}\label{eq:exp_filter}
    \sigma \left( \frac{k}{p} \right) 
        = e^{ - \varepsilon p^2 \Delta t \left( \frac{k}{p} \right) \left( \frac{k+1}{p} \right)} 
        \approx e^{ - \varepsilon p^2 \Delta t \left( \frac{k}{p} \right)^2}.
\end{equation}
Only now, $\sigma: [0,1] \to [0,1]$ can be seen as an exponential filter of \emph{order} $2$ with \emph{filter strength} $\alpha := \varepsilon p^2 \Delta t$. 
See for instance the work \cite{hesthaven2008filtering} of Hesthaven and Kirby, as well as references therein.

By now, modal filtering is an established shock-capturing tool and was applied in a great number of 
works, 
\cite{hu1996absorbing,yang1997spectral,gottlieb2001spectral,boyd2001chebyshev,giraldo2002nodal,%
kanevsky2006idempotent,hesthaven2008filtering,meister2012application,meister2013extended,%
glaubitz2016application,offner2017stability}.
In our numerical tests however, we observed the Legendre artificial viscosity as well as the corresponding modal filter applied in a Legendre basis to perform poorer than for instance the artificial viscosity method of Kl\"ockner et al. 
In Figure \ref{fig:dismiss_Legendre}, this is demonstrated for the linear advection equation. 

\begin{figure}[!htb]
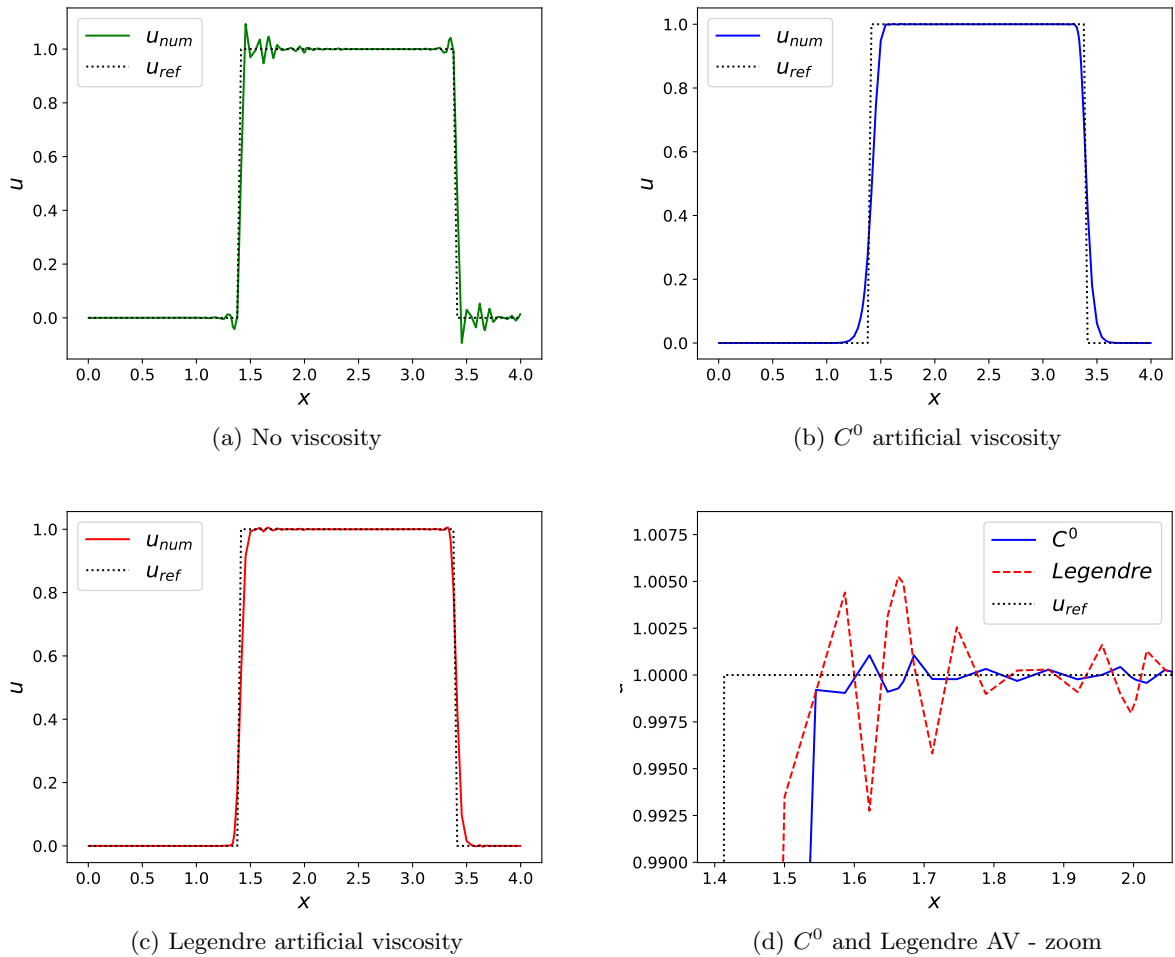

  \centering
  \begin{subfigure}[b]{0.49\textwidth}
    \includegraphics[width=\textwidth]{%
      linear_disc_IC_no_visc_N10_K12}
    \caption{No viscosity}
    \label{fig:sol_no_vsic}
  \end{subfigure}%
  ~
  \begin{subfigure}[b]{0.49\textwidth}
    \includegraphics[width=\textwidth]{%
      linear_disc_IC_C0_N10_K12}
    \caption{$C^0$ artificial viscosity}
    \label{fig:sol_Kloeckner}
  \end{subfigure}%
  \\
   \begin{subfigure}[b]{0.49\textwidth}
    \includegraphics[width=\textwidth]{%
      linear_disc_IC_Legendre_N10_K12}
    \caption{Legendre artificial viscosity}
    \label{fig:sol_Legendre}
  \end{subfigure}%
  ~
  \begin{subfigure}[b]{0.49\textwidth}
    \includegraphics[width=\textwidth]{%
      linear_disc_IC_C0_and_Legendre_N10_K12_zoom}
    \caption{$C^0$ and Legendre AV - zoom}
    \label{fig:sol_zoom}
  \end{subfigure}%
  \caption{Numerical solutions and viscosities for the linear advection equation using $12$ elements and polynomials of degree $10$ or less.
           }
  \label{fig:dismiss_Legendre}
\end{figure}

While there are nearly no oscillations present anymore in the numerical solution obtained by the 
$C^0$ artificial viscosity of Kl\"ockner et al. (\ref{fig:sol_Kloeckner}), the numerical solution 
obtained by the Legendre artificial viscosity still shows spurious oscillations 
(\ref{fig:sol_Legendre}). 
This is illustrated in greater detail in Figure \ref{fig:sol_zoom}, where the numerical solutions 
for 
the $C^0$ and Legendre artificial viscosity method are compared to the right of the jump 
discontinuity. 
By running various numerical tests, the particular viscosity distribution of the Legendre artificial viscosity was explored to be the determining factor for the spurious oscillations in the corresponding numerical solution. 
Further illustrated by the subsequent Figure \ref{fig:Gegenbauer_vis}, the Legendre viscosity rapidly vanishes away from the element center. 
Thus, if a shock discontinuity performs near a cell boundary, nearly no - in fact arbitrary little - dissipation is added there by the Legendre viscosity. 
This observation can be made whenever shocks perform near element boundaries and yield to the conclusion that the Legendre artificial viscosity method as well as corresponding modal filters should be rejected. 

Naturally, the question arises if this bottleneck might be overcome by utilising more 
suitable viscosities, which then again should correspond to modal filters in a proper basis of 
orthogonal polynomials. 
While the first attempt to construct more appropriate viscosity distributions will be tackled in subsection \ref{subsec:novel_viscosities} by constructing novel viscosity distributions, the possibility of still finding corresponding modal filters can immediately be rejected. 
The decisive property of the Legendre artificial viscosity is that the orthogonal Legendre 
polynomials occur as eigenfunctions of the Sturm-Liouville operator $Ly = \left( (1 - x^2) y' 
\right)'$ with eigenvalues $-\lambda_k = -k(k+1)$, i.e. 
\begin{equation}\label{eq:S-L_2nd_order}
    a(x) y'' + b(x) y' + c(x) y = \lambda y
\end{equation}
with $a(x) = 1 - x^2, b(x) = 2x$ and $c(x) = 0$ for the Legendre artificial viscosity.
Yet, following a simple argument of Bochner \cite{bochner1929sturm,routh1884some}, for every 
Sturm-Liouville operator featuring polynomials as eigenfunctions, the corresponding coefficients 
$a,b$ and $c$ must be polynomials of degree $2,1$ and $0$, respectively. 
Further keeping in mind that the resulting viscosity needs to vanish at the boundaries to ensure conservation and should be positive (see Theorem \ref{thm:stability}), up to a positive constant, $a(x) = 1-x^2$ is the very only choice. 
Note that every modal filter $\sigma_k = \exp( - \alpha k(k+1+\alpha+\beta) )$ applied 
in the associated basis of orthogonal Jacobi polynomials $\{ P_k^{(\alpha,\beta)} \}$ is equivalent 
to artificial viscosity of the form 
\begin{equation}\label{eq:Jacobi}
    \partial_x \left( 1 - x^2 \right) \partial_x u + \left( \beta - \alpha - ( \alpha + \beta ) x \right) \partial_x u,
\end{equation}
by a first order operator splitting in time, since the Jacobi polynomials are eigenfunctions of the more general Sturm-Liouville operator $Ly = \left( (1 - x^2) y' \right)' + \left( \beta - \alpha - ( \alpha + \beta ) x \right) y'$. 
While the dissipative term $\partial_x \left( 1 - x^2 \right) \partial_x$ is still the same, in this case also undesired dispersion would be added to the equation.

\subsection{\texorpdfstring{Novel $C^\infty_0$ artificial viscosities}{Novel smooth and compactly 
supported artificial viscosities}}
\label{subsec:novel_viscosities}

To the best of the authors' knowledge, this work is the first to decline the whole class of 
Jacobi viscosities and their corresponding modal filters. 
In order to somewhat fill the resulting void in sub-cell shock capturing methods for spectral schemes, novel viscosity distributions will be proposed in this subsection in order to overcome the shortcomings of the Legendre and Jacobi ones. 

Our idea is to construct more general smooth viscosities  
\begin{equation}\label{eq:general_vis}
    \varepsilon(x) = \varepsilon \ \nu(x),
\end{equation}
which still vanish at the element boundaries and have their peak at the element center.  
Conservation of the resulting \emph{$C^\infty_0$ artificial viscosities} $\varepsilon \left( \partial_x \nu(x) \partial_x \right) u $ is again ensured by Theorem \ref{thm:conservation}. 
The crucial requirement is for the viscosity distribution $\nu(x)$ to vanish at the element boundaries. 
Thus, other viscosity distributions than the Legendre one are obviously possible. 
Yet, non-negativity of these is mandatory to carry the design criteria of entropy stability \eqref{eq:stability} over to the artificial viscosity extension 
\begin{equation}\label{eq:visc_extension}
    \partial_t u + \partial_x f(u) = \varepsilon \left( \partial_x \nu(x) \partial_x \right) u.
\end{equation}
This is further summarized in 

\begin{theorem}\label{thm:stability}
    Augmenting a conservation law $\partial_t u + \partial_x f(u) = 0$ with a $C^\infty_0$ artificial viscosity $\varepsilon \left( \partial_x \nu(x) \partial_x \right) u $ with positive viscosity strength $\varepsilon$ and viscosity distribution $\nu \geq 0$, again results in entropy stability. 
    I.e. 
    \begin{equation}
        \od{}{t} \int_{\Omega} U 
        \leq - F(u)\Big|_{\partial \Omega}
    \end{equation}
    holds for solutions of $\partial_t u + \partial_x f(u) = \varepsilon \left( \partial_x \nu(x) \partial_x \right) u $ and an associated entropy-entropy flux pair $(U,F)$.
\end{theorem}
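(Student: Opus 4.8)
The plan is to mimic the structure of the proof of Theorem~\ref{thm:conservation}, but now testing the viscosity extension against the entropy variable rather than against the constant function. First I would multiply the extended equation $\partial_t u + \partial_x f(u) = \varepsilon \left( \partial_x \nu(x) \partial_x \right) u$ by the entropy variable $U'(u)$ and exploit the compatibility relation $U'(u) f'(u) = F'(u)$ together with the chain rule to recast the left-hand side in the divergence form of the entropy equation, obtaining $\partial_t U(u) + \partial_x F(u) = \varepsilon\, U'(u) \left( \partial_x \nu(x) \partial_x \right) u$. Integrating over $\Omega$ then turns the flux term into the boundary contribution $F(u)\Big|_{\partial\Omega}$, which is precisely the term appearing on the right of the claimed inequality.

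The decisive step is to show that the integrated viscous term is non-positive. I would integrate by parts once, writing $\int_\Omega U'(u)\, \partial_x\!\left( \nu \partial_x u \right) \d x = U'(u)\, \nu\, \partial_x u \Big|_{\partial\Omega} - \int_\Omega U''(u) \left( \partial_x u \right)^2 \nu \d x$, using $\partial_x U'(u) = U''(u)\, \partial_x u$. The boundary term vanishes because $\nu$ is compactly supported on $\Omega$ and hence vanishes at $\partial\Omega$ — this is exactly the hypothesis that was already needed for conservation in Theorem~\ref{thm:conservation}. What remains is the bulk integral, whose integrand is a product of three non-negative factors: the strictly convex entropy gives $U''(u) > 0$, the square $\left( \partial_x u \right)^2$ is non-negative, and the viscosity distribution satisfies $\nu \geq 0$ by assumption; with $\varepsilon > 0$ the whole term carries a minus sign and is therefore $\leq 0$.

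Putting these pieces together yields $\od{}{t}\int_\Omega U = -F(u)\Big|_{\partial\Omega} - \varepsilon \int_\Omega U''(u)\left( \partial_x u \right)^2 \nu \d x \leq -F(u)\Big|_{\partial\Omega}$, which is the assertion. The main obstacle, or rather the point the proof is designed to expose, is the sign bookkeeping in the bulk term: it is exactly here that the non-negativity of $\nu$ becomes indispensable, since a sign-changing distribution would let the dissipation integral turn positive where $\nu < 0$ and thereby destroy entropy stability. For systems such as the Euler equations the same argument goes through once $U'(u)$ is read as the gradient $\nabla_u U$ and $U''(u)$ as the Hessian $\nabla_u^2 U$, whose positive definiteness (strict convexity of $U$) replaces the scalar inequality $U'' > 0$; I would simply remark on this extension rather than repeat the computation.
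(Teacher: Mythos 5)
Your proof is correct and follows essentially the same route as the paper's: both hinge on the product-rule identity $U'(u)\,\partial_x\left(\nu\,\partial_x u\right) = \partial_x\left(\nu\,\partial_x U\right) - \nu\,U''(u)\left(\partial_x u\right)^2$, the non-negativity of $\nu\,U''(u)\left(\partial_x u\right)^2$, and the vanishing of the boundary term because $\nu$ is compactly supported. The only cosmetic difference is that the paper first records the pointwise entropy inequality $\partial_t U + \partial_x F \leq \varepsilon\left(\partial_x \nu(x) \partial_x\right) U$ and then integrates (element by element), whereas you integrate by parts directly over $\Omega$.
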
 

\begin{proof}
    Multiplying the viscosity extension \eqref{eq:visc_extension} by an entropy gradient $U'$, i.e. satisfying $U'(u) f'(u) = F'(u)$, one obtains an entropy equation
    \begin{equation}
        \partial_t U + \partial_x F = \varepsilon U' \left( \partial_x \nu(x) \partial_x \right) u.
    \end{equation}
    Since $\nu(x) \geq 0$ and $U$ is convex, the inequality 
    \begin{align}
        \left( \partial_x \nu(x) \partial_x \right) U 
            & = \partial_x \left( \nu U' \partial_x u \right) \\ 
            & = \nu'(x) U' \partial_x u 
                + \nu(x) U'' \left( \partial_x u \right)^2 
                + \nu(x) U' \partial_{xx} u \\
            & \geq \nu'(x) U' \partial_x u 
                + \nu(x) U' \partial_{xx} u \\ 
            & = U' \left( \partial_x \nu(x) \partial_x \right) u
    \end{align}
    holds, and thus the entropy inequality 
    \begin{equation}
        \partial_t U + \partial_x F \leq \left( \partial_x \nu(x) \partial_x \right) U
    \end{equation}
    follows. 
    Finally, for the rate of change of entropy in the element $\Omega_i$ under consideration, 
    \begin{equation}
    \begin{aligned}
        \od{}{t} \int_{\Omega_i} U 
            & \leq - \int_{\Omega_i} \partial_x F(u) + \varepsilon \int_{\Omega_i} \left( 
\partial_x \nu(x) \partial_x \right) U \\
            & = - F(u)\Big|_{\partial \Omega_i} + \varepsilon \underbrace{\nu(x) \partial_x U 
\Big|_{\partial \Omega_i}}_{= 0}
            = - F(u)\Big|_{\partial \Omega_i}
    \end{aligned} 
    \end{equation}
    is proven, which ensures the design criteria of entropy stability \eqref{eq:stability}. 
\end{proof} 

Note that both, the local artificial viscosity of Persson and Peraire \cite{persson2006sub} 
using a piecewise constant viscosity as well as the continuous refinement and Klöckner et al. 
\cite{klockner2011viscous} yield entropy stability. 
In fact, the proof of Theorem \ref{thm:stability} easily adapts to continuous artificial 
viscosities over several cells as long as they are compactly supported on $\Omega$. 
This is furthermore in accordance with the well-known effect of dissipative mechanisms on shocks, such as heat conduction and more general viscosity terms \cite{vonneumann1950method}. 

\begin{remark}
    In section \ref{sec:motivation}, $C^0$ has been referred to the viscosity being globally 
$C^0$, i.e. continuous on the whole domain $\Omega$. 
    In the present section, however, $C^\infty_0$ refers to the viscosity being locally 
$C^\infty_0$, i.e. smooth and compactly supported in every element $\Omega_i$. 
    Globally, the viscosity will still be only continuous, if no additional restrictions at the 
boundaries are enforced for the viscosity.
\end{remark}

Finally, to overcome the drawback of the Legendre viscosity, novel viscosity distributions $\nu$ are proposed with $\nu \approx 1$ over more of the reference element $[-1,1]$:
\begin{enumerate}
    \item 
    The \emph{Gegenbauer viscosity} 
    \begin{equation}
        \nu(x) = \left( 1 - x^2 \right)^\lambda 
    \end{equation}
    for $\lambda > 0$ is somewhat the 'natural' generalisation of the Legendre viscosity. 
    Choosing small values for $\lambda$, the Gegenbauer viscosity becomes $\nu \approx 1$ over most of the interval, therefore fulfilling our devise for the novel viscosities. 
    This is further illustrated in Figure \ref{fig:Gegenbauer_vis} for $\lambda=1$ and $\lambda=1/10$.
    
    \item 
    The \emph{super Gaussian viscosity} 
    \begin{equation}\label{eq:super_gaussian}
        \nu(x) = e^{-\alpha x^{2\lambda}},
    \end{equation}
    where $\alpha = - \ln \varepsilon_M$ with $\varepsilon_M$ representing machine precision, 
    
    \item 
    as well as the \emph{Gevrey viscosity} 
    \begin{equation}
        \nu(x) = 
        \begin{cases}
            \exp\left( \frac{x^2}{\lambda (x^2 - 1)} \right) , & \quad 0 \leq |x| < 1, \\ 
            0 , & \quad |x| \geq 1,
        \end{cases},
    \end{equation}
\end{enumerate}
are inspired by corresponding weight functions, which were for instance studied in the context of Fourier (pseudo-)spectral reprojection \cite{gelb2006robust} as well as spectral mollifiers \cite{tadmor2002adaptive,tanner2006optimal}. 
Yet, this is the first time, they are proposed as viscosity distributions in an artificial viscosity method. 
Examples for both viscosities are illustrated in Figure \ref{fig:Gaussian_Gevry_vis}.

\begin{figure}[!htb]
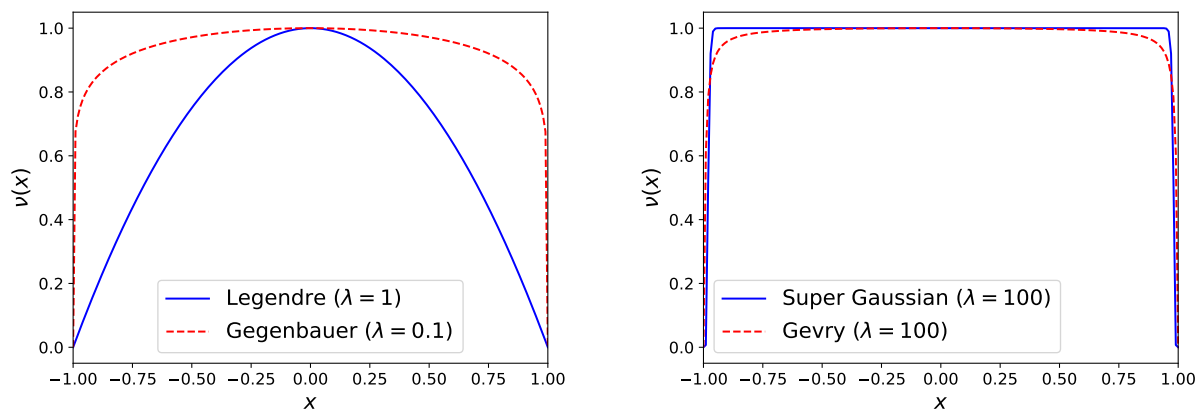

  \centering
  \begin{subfigure}[b]{0.49\textwidth}
    \includegraphics[width=\textwidth]{%
      visc_Leg_Geg}
    \caption{Legendre viscosity ($\lambda = 1$) and Gegenbauer viscosity for $\lambda = 0.1$}
    \label{fig:Gegenbauer_vis}
  \end{subfigure}%
  ~
  \begin{subfigure}[b]{0.49\textwidth}
    \includegraphics[width=\textwidth]{%
      visc_SG_Gev}
    \caption{Super Gaussian viscosity and Gevry viscosity for 
$\lambda = 100$}
    \label{fig:Gaussian_Gevry_vis}
  \end{subfigure}%
  \caption{Legendre, Gegenbauer, super Gaussian, and Gevry viscosities.
           }
  \label{fig:viscosities}
\end{figure}

\begin{remark}
    It should be stressed that in principle $C_0^\infty$ viscosities yield the same time step 
restrictions as common viscosities. 
    While this drawback might be overcome by modal filtering, it was shown in section 
\ref{subsec:Legendre} that modal filtering corresponds to a Legendre viscosity, which in general is 
inappropriately distributed and was demonstrated in Figure \ref{fig:dismiss_Legendre} to 
result in undesired numerical solutions. 
    Yet, the compact nature of the new $C_0^\infty$ viscosities allow a more efficient treatment 
when, for instance, implicit-explicit (IMEX) time integration schemes \cite{ascher1995implicit} are 
applied to bypass too harsh (explicit) time step restrictions, since they are applied to less 
elements.
\end{remark}

\begin{remark}
    Implementation of the $C_0^\infty$ viscosity \eqref{eq:general_vis} is done in the very same way 
as for usual viscosities, i.e. by a local DG method \eqref{eq:local-DG}, see 
\cite{cockburn1998local}, or by direct calculation of second derivatives as done in 
\cite{ranocha2018stability}. 
  Thus, no additional code is necessary when going over from usual viscosities to 
$C_0^\infty$ viscosities. 
  In fact, $C_0^\infty$ viscosities are easier to implement from scratch, since no additional 
'smoothing procedure' \cite{barter2010shock,klockner2011viscous} as described in section 
\ref{sub:recent} is needed. 
  In our implementation, we have used the local DG method.
\end{remark}

Last but not least, the question of how to choose the viscosity strength $\varepsilon$ 
has to be addressed. 
It is evident that the strength should again be strictly greater than $0$ in elements where 
a shock is present and equal to $0$ everywhere else. 
The next section thus aims at a more precise scaling of the viscosity strength in the presence of discontinuities in the solution. 
\section{Shock sensor}
\label{sec:sensor}

In this section, we describe the shock sensor which is utilised in the our subsequent numerical tests.
The shock detection algorithm is based on the one of Persson and Peraire \cite{persson2006sub} and 
does not just flag troubled cells but also steers the viscosity strength $\varepsilon$.

Further, the sensor is based on the rate of decay of the expansion coefficients of the polynomial approximation $u_p$ when represented with respect to a hierarchical orthonormal basis $\{ \varphi_k \}_{k=0}^{K(p)}$ of the approximation space $\mathbb{P}_p\left( \Omega_{ref} \right)$. 
Here, $\Omega_{ref}$ denotes a reference element in which all computations are performed. 
Typical choices are $\Omega_{ref}=[-1,1]$ in one dimension and, for instance, in two dimensions $\Omega_{ref} = [-1,1]^2$ when quadrilateral elements are used or $\Omega_{ref} = \mathbb{T} := \{(x,y) \in \R^2| x\geq 0,y\geq 0, x+y\leq 1\}$ when triangular elements are used. 
Furthermore, $K(p)-1$ denotes the number of basis elements needed to generate the approximation space. 
Thus, when expressed in a hierarchical orthonormal basis $\{ \varphi_k \}_{k=0}^{K(p)}$, the numerical solution reads $u_p = \sum_{k=0}^{K(p)} \hat{u}_k \varphi_k$ on every element. 
The truncated series of reduced degree $p-1$ is represented by $u_{p-1} = \sum_{k=0}^{K(p-1)} 
\hat{u}_k \varphi_k$.
The shock detector is now based on 
\begin{equation}
    S := \frac{\scp{u_p-u_{p-1}}{u_p-u_{p-1}}}{\norm{u_p}^2}
\end{equation}
on every element. Numerical experiments indicate that a hierarchical orthogonal basis is sufficient to define the operator $S$. 
By assuming analogy to Fourier expansions, $S$ measures the degree of smoothness and is expected to scale like 
\begin{equation}
    S \sim p^{-4}
\end{equation}
for continuous $u$ for which $u_p$ is the orthogonal projection into $\mathbb{P}_p\left( \Omega_{ref} \right)$. 
Thus, artificial viscosity should be activated for $S>p^{-4}$, since a discontinuity is 
anticipated then. 
Applying the logarithm on both terms, the above condition can also be written as 
\begin{equation}\label{eq:sensor}
    s := \log(S) > -4 \log(p) =: s_{ref}.  
\end{equation}
While the reference value $s_{ref}$ for continuous $u$ remains constant, the sensor value $s$ linearly increases for increasing smoothness and decreases for decreasing smoothness of $u$. 

Once a shock has been sensed with the help of \eqref{eq:sensor}, and artificial viscosity is activated, the amount of viscosity, i.e. the viscosity strength $\varepsilon$ in \eqref{eq:general_vis}, is determined by the smooth function 
\begin{equation}\label{eq:sensor_function}
    \varepsilon = 
    \begin{cases}
        0 & , s < s_{ref} - \kappa, \\ 
        \frac{\varepsilon_{max}}{2}\left( 1 + \sin \frac{\pi (s - s_{ref})}{2 \kappa} \right) & , 
            s_{ref} - \kappa \leq s \leq s_{ref} + \kappa, \\ 
        \varepsilon_{max} & , s > s_{ref} + \kappa,
    \end{cases}
\end{equation}
with \textit{maximal viscosity strength} $\varepsilon_{max} \sim \frac{h}{p}$ and a problem dependent \textit{ramp parameter} $\kappa > 0$. 
In this work, following \cite{klockner2011viscous}, the ramp parameter is chosen as $\kappa = 1$, while, for scalar problems, the maximal viscosity strength is chosen as 
\begin{equation}
    \varepsilon_{max} 
        = \frac{1}{2} \cdot \max\left( \left| \frac{\partial f}{ \partial u} \right| \right) \cdot 
        \frac{h}{p}.
\end{equation}

Further, following \cite{barter2010shock}, we made a slight modification of the sensor by going over to use 
\begin{equation}\label{eq:modification}
    s = \log F 
    \quad \text{with} \quad 
    F = \min \left( c p^4 \cdot S, 1 \right) 
\end{equation} 
and $s_{ref} = -2.0$ instead of \eqref{eq:sensor}, where $c$ is a suitably chosen parameter 
defined to control the sensor sensitiveness. 
This parameter becomes the only problem dependent 
variable and, in general, the stronger the nonlinearity of the underlying equations is the 
higher the value of $c$ becomes.

Closing this section, we want to note that several other shock sensors have been proposed for the selective application of artificial viscosity as well as mesh refinement. 
These shock detection algorithms, for instance, use information on 
the $L^2$ norm of the residual of the variational form 
\cite{bassi1995accurate,jaffre1995convergence}, 
the primary orientation of the discontinuity \cite{hartmann2006adaptive}, 
magnitude of the facial inter-element jumps \cite{barter2010shock,feistauer2007robust} or 
entropy pairs \cite{guermond2008entropy}. 
Yet, for most of these sensors it is typically unclear which value indicates a shock 
discontinuity or other features of instability. 
A shortcoming of these methods thus is that a variety of scaling choices on a case-by-case basis have to be proposed, where no assignment of the scaled quantity to an explicit meaning is clear anymore.

The above sensor of Persson and Peraire, however, has the advantage of a proper scaling: 
Most of the listed quantities essentially relate to the smoothness of the underlying solution $u$, and thus how well-resolved it is by a polynomial (best) approximation. 
There are also enhancements of the sensor of Persson and Peraire, see for instance the recommended 
work \cite{klockner2011viscous}, which would exceed the scope of this work. 
Since we want to focus on the advantage of using novel viscosities rather than on the underlying shock detection algorithm, the subsequent numerical tests are performed by using the basic but fairly reliable sensor \eqref{eq:sensor_function}. 
\section{Extension to the Euler equations of gas dynamics}
\label{sec:tests}

In this section, the prior theoretical investigations and results are demonstrated for the system 
of Euler equations. 
The extension of the proposed artificial viscosity method to the system of Euler equations (or any 
other system) is straightforward and described in section \ref{sub:Sod}. 
Numerical tests for the problems of Sods shock tube as well as Shu and Osher's shock tube are 
presented. 
In order to keep the presentation compact, we have decided to demonstrate the subsequent tests just for the super Gaussian viscosity \eqref{eq:super_gaussian} as a prototype of the proposed $C^0$ viscosities.

The numerical tests demonstrate that the proposed $C_0^\infty$ viscosities provide observable 
sharper profiles and steeper gradients than the usual $C^0$ viscosity. 
Shu and Osher's shock tube - especially designed for high-order methods - further illustrates that 
the new $C_0^\infty$ artificial viscosity method, compared to the $C^0$ artificial viscosity method, 
is capable of an advanced representation of wave amplitudes of high frequency features.   

\subsection{Sod's shock tube}
\label{sub:Sod}

We now extend the artificial viscosity method to the Euler equations of gas dynamics 
\begin{equation}\label{eq:euler2}
    \partial_t 
    \underbrace{\begin{pmatrix} \rho \\ m \\ E \end{pmatrix}}_{=u} 
    + \partial_x 
    \underbrace{\begin{pmatrix} m \\ vm + \rho P \\ v (E + P) \end{pmatrix}}_{=f(u)}
    = 0,
\end{equation}
where $\rho$ is the density, $v$ is the velocity, $E$ is the total energy, $m = \rho v$ is the momentum, $P = (\gamma - 1)(E-\frac{1}{2}v^2 \rho)$ is the pressure, and $\gamma = 1.4$ is taken for the ratio of specific heat. 
The Euler equations express the conservation of mass, momentum, and energy for a perfect gas. 
The extension of the artificial viscosity is straightforward, since the method and the prior sensor is applied to the density variable $u_1 = \rho$ and, once the shock is detected, the viscosity is added to all conserved variables. 

Figure \ref{fig:Sod} reports the results of this procedure for the standard test case of Sod's shock tube 
\begin{equation}\label{eq:Sod}
    \left\{
    \begin{aligned}
        & \rho = \rho_L = 1,     && P = P_L = 1,   &&& v = v_l = 1, &&&& \text{ if } x < 0.5, \\ 
        & \rho = \rho_R = 0.125, && P = P_R = 0.1, &&& v = v_R = 0, &&&& \text{ if } x > 0.5, 
    \end{aligned}
    \right.
\end{equation}
on $\Omega=[0,1]$ at time $t = 0.2$.

\begin{figure}[!htb]
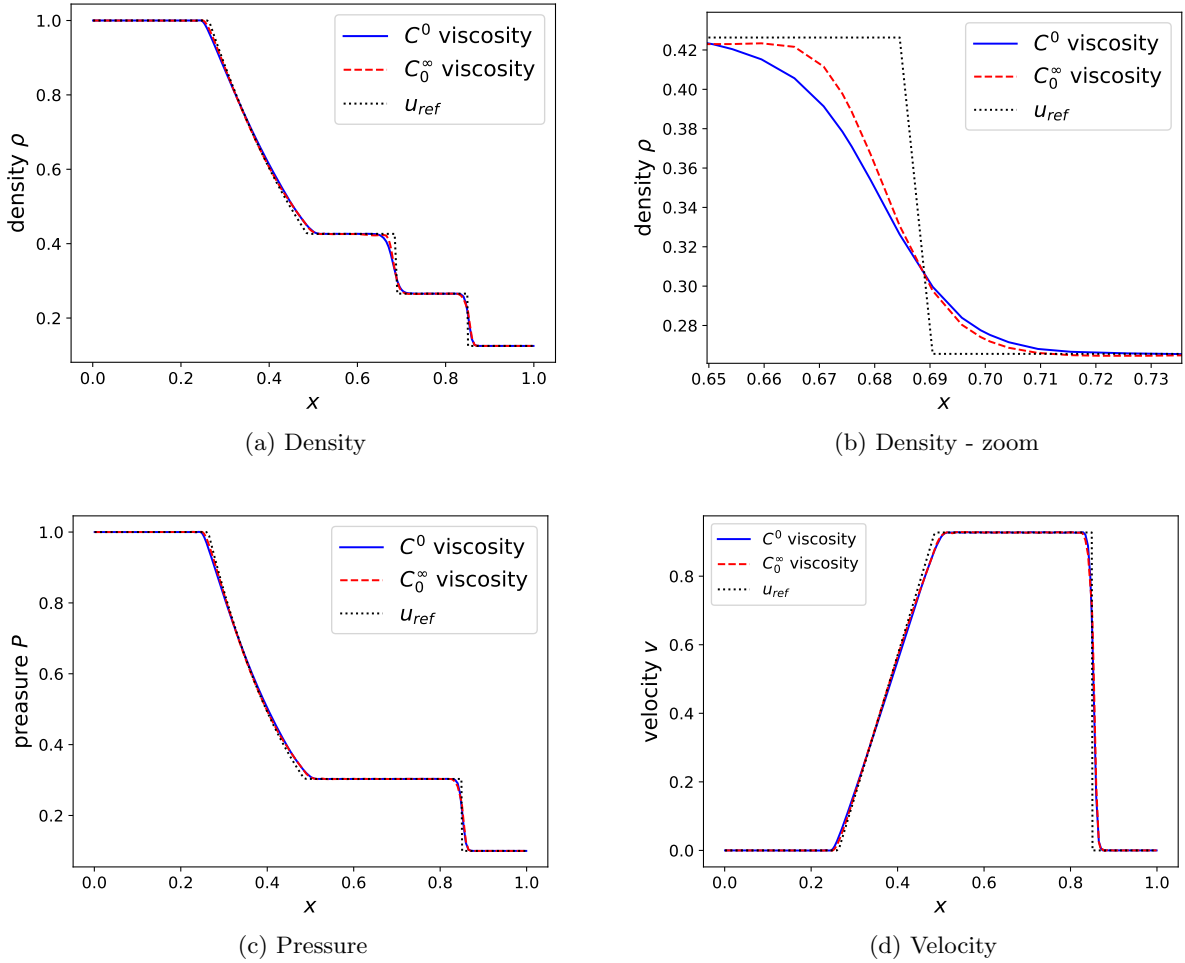

  \centering
  \begin{subfigure}[b]{0.5\textwidth}
    \includegraphics[width=\textwidth]{%
      Euler_Sods_den_N5_K40}
    \caption{Density}
    \label{fig:Sod_density}
  \end{subfigure}%
  ~
  \begin{subfigure}[b]{0.49\textwidth}
    \includegraphics[width=\textwidth]{%
      Euler_Sods_rho_N5_K40_zoom}
    \caption{Density - zoom}
    \label{fig:Sod_density_zoom}
  \end{subfigure}%
  \\
  \begin{subfigure}[b]{0.49\textwidth}
    \includegraphics[width=\textwidth]{%
      Euler_Sods_pr_N5_K40}
    \caption{Pressure}
    \label{fig:Sod_preasure}
  \end{subfigure}%
  ~
  \begin{subfigure}[b]{0.49\textwidth}
    \includegraphics[width=\textwidth]{%
      Euler_Sods_vel_N5_K40}
    \caption{Velocity}
    \label{fig:Sod_velocity}
  \end{subfigure}%
  \caption{Numerical solutions for Sod's shock tube.}
  \label{fig:Sod}
\end{figure}

The numerical solutions were obtained using $I=40$ equidistant elements and polynomial degree $p=5$.
We decided for Sod's shock tube as a first test case since this is a relatively mild test case for 
which an analytical reference solution can still be determined, \cite{sod1978survey}.
The profiles for density, pressure, and velocity are illustrated in Figure \ref{fig:Sod} by (black) 
dotted lines and consist of a left rarefaction, a contact, and a right shock.

At the same time, the profiles of numerical solutions are realised by (blue) straight lines for 
the $C^0$ artificial viscosity method and by (red) dashed lines for the super Gaussian $C_0^\infty$ 
artificial viscosity method. 
For the super Gaussian viscosity \eqref{eq:super_gaussian} the parameters $\lambda = 100$ and 
$\alpha = -\ln\left(10^{-16}\right)$ were chosen.
We made no effort to optimise these parameters, which appeared to be fairly robust in our tests. 
More suitable choices of parameters, yielding to further enhanced results, thus seem possible. 
The numerical solution obtained by the super Gaussian $C_0^\infty$ artificial viscosity method shows 
a sharper profile and steeper gradients; particularly visible in the density profile at the 
contact discontinuity around $x\approx 0.68$, see Figure \ref{fig:Sod_density_zoom}.

\subsection{Shu and Osher's shock tube}

The test case of Sod's shock tube \eqref{eq:Sod} is reasonable when it is intended to 
demonstrate how a method handles different types of discontinuities. 
Yet, no small-scale features were present. 
A more challenging test case to observe if a method is able to capture both, shocks as well as 
small-scale smooth flows, is Shu and Osher's shock tube 
\begin{equation}\label{eq:Shu_OSher} \small
    \left\{
    \begin{aligned}
        & \rho = 3.857143, && P = 10.33333, &&& v = 2.629369, 
        &&&& \text{if } x < 0.5, \\
        & \rho = 1 + 0.2 \sin\left(5 \pi x\right), && P = 1, &&& v = 0, 
        &&&& \text{if } x > 0.5, 
    \end{aligned}
    \right.
\end{equation}
for the Euler equations \eqref{eq:euler2} on $\Omega = [-5,5]$, 
which was proposed in \cite{shu1989efficient}.  
Figure \ref{fig:Shu_Osher} shows the profiles of the numerical solutions for density, 
pressure, and velocity at time $t=1.8$. 

\begin{figure}[!htb]
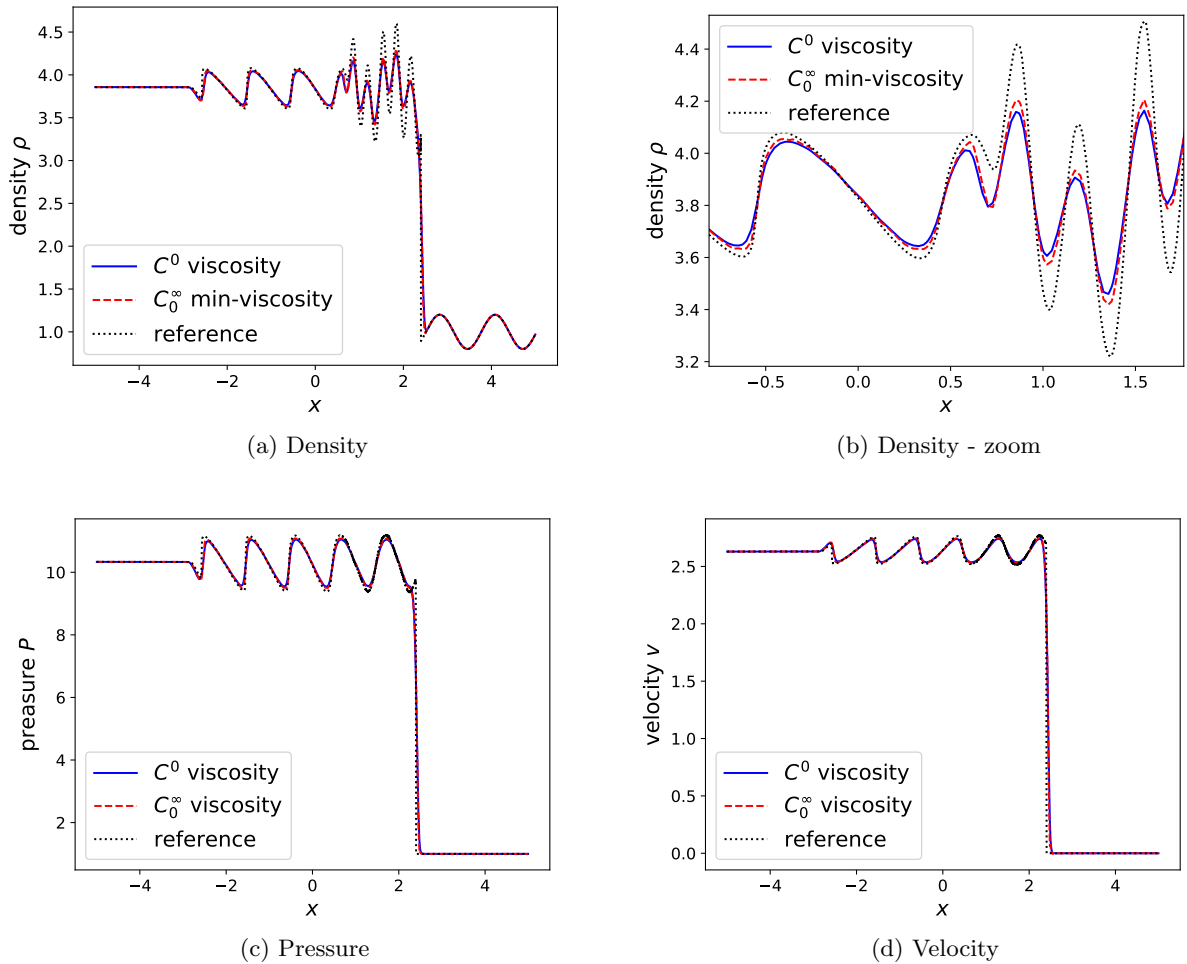

  \centering
  \begin{subfigure}[b]{0.5\textwidth}
    \includegraphics[width=\textwidth]{%
      Euler_Shu_Osher_den_N5_K80}
    \caption{Density}
    \label{fig:Shu_Osher_density}
  \end{subfigure}%
  ~
  \begin{subfigure}[b]{0.49\textwidth}
    \includegraphics[width=\textwidth]{%
      Rho_N5_K80_Shu-Osher_zoom}
    \caption{Density - zoom}
    \label{fig:Shu_Osher_density_zoom}
  \end{subfigure}%
  \\
  \begin{subfigure}[b]{0.49\textwidth}
    \includegraphics[width=\textwidth]{%
      Euler_Shu_Osher_pr_N5_K80}
    \caption{Pressure}
    \label{fig:Shu_Osher_preasure}
  \end{subfigure}%
  ~
  \begin{subfigure}[b]{0.49\textwidth}
    \includegraphics[width=\textwidth]{%
      Euler_Shu_Osher_vel_N5_K80}
    \caption{Velocity}
    \label{fig:Shu_Osher_velocity}
  \end{subfigure}%
  \caption{Numerical solutions for Shu and Osher's shock tube.}
  \label{fig:Shu_Osher}
\end{figure}

The numerical solutions by $C^0$ artificial viscosity - (blue) straight line - 
and by super Gaussian $C_0^\infty$ artificial viscosity - (red) dashed line - are both computed 
using $I=80$ equidistant elements and polynomial approximations of degree $p=5$.
The reference solution - (black) dotted line - is computed with polynomial degree $p=1$, $I 
=10 \ 000$ elements, and the generalized slope limiter, 
see  
\cite{cockburn1991runge,cockburn1989tvb,cockburn1989tvb2,cockburn1990runge,cockburn1998runge,cockburn1999discontinuous} 
or \cite[page 152]{hesthaven2007nodal}.
We decided for the test case of Shu and Osher's shock tube, 
since it demonstrates the advantage of a reasonable chosen amount of dissipation, when the problem 
involved has some structure. 
In fact, we can observe an enhanced representation of the small-scale wave amplitudes, 
especially left from the shock discontinuity in the profile of the density, for the novel super 
Gaussian $C_0^\infty$ artificial viscosity method compared to the usual $C^0$ artificial viscosity 
method. 
This is illustrated in greater detail in Figure \ref{fig:Shu_Osher_density_zoom}.
For instance, in long time or large eddy simulations, the preservation of such small-scale features 
is highly desired.   

\begin{remark}
    The above results raise the question of the effect of the proposed viscosity on the spectral 
resolution of the method, in particular compared to usual viscosities. 
    Future work will investigate these spectral properties by means of a (nonlinear) spectral 
analysis as described, for instance, in 
    \cite{pirozzoli2006spectral,fauconnier2011spectral,wheatley2011spectral}. 
    We think it would be of interest to have a more rigorous study on how the spectral resolution 
of the artificial viscosity method depends on the viscosity function.
\end{remark}
\section{Summary and conclusions}
\label{sec:summary}

In this work, a novel artificial viscosity method has been introduced utilising smooth and compactly supported viscosity distributions. 
In order to derive them, widely used artificial viscosity methods, such as the ones of Persson and Peraire as well as Kl\"ockner et al., were analytically revisited with respect to the essential design criteria of conservation and entropy stability. 
It was proved for the viscosity extension that conservation carries over if the viscosity is continuous and compactly supported, while entropy stability already holds for positive viscosities. 

Further investigating the method of modal filtering, it was demonstrated that this strategy has inherent shortcomings, which are related to the nature of Legendre and more general Jacobi viscosities. 
Since these have their peak in the middle of elements and rapidly decrease away from the element center, problems arise for shock discontinuities near element boundaries. 

Overcoming this drawback, the new $C_0^\infty$ were constructed such that they are 
approximately constant over nearly the whole element. 
Smooth and compactly supported functions with $\approx 1$ over most of an element can be found in 
the field of robust reprojection as well as mollifiers and were proposed as viscosity distributions 
for the first time in this work. 

Numerical tests for the Euler equations demonstrated the novel (super Gaussian) $C_0^\infty$ artificial viscosity to provide sharper profiles, steeper gradients and a higher resolution of small-scale features while still maintaining stability of the method. 

Since all artificial viscosity methods heavily rely on trustworthy detection of discontinuities, further research on shock sensor strategies is mandatory. 
It is our opinion that especially shock sensors which are capable of detecting the precise location 
and strength of jump discontinuities, such as the concentration method of Gelb and Tadmor 
\cite{gelb1999detection,gelb2000detection,gelb2006adaptive,gelb2008detection,offner2013detecting} or 
polynomial 
annihilation \cite{archibald2005polynomial,archibald2008determining}, seem highly promising.
This would also allow to moreover adapt the artificial viscosity method in every element to the exact location of a shock. 
Dissipation would just be added where it is needed.

\acknowledgement{This work developed during a two-moth stay of the first author at the Max Planck 
Institute for Mathematics (MPIM) in Bonn during summer of 2017. 
He would like to express his gratitude for the generous financial support by the MPIM as well as 
the warm and inspiring research atmosphere provided by its staff. 
Further, we would like to thank the anonymous referees for many helpful suggestions.
}  

\bibliographystyle{abbrv}
\bibliography{literature}

\begin{thebibliography}{10}

\bibitem{archibald2005polynomial}
R.~Archibald, A.~Gelb, and J.~Yoon.
\newblock Polynomial fitting for edge detection in irregularly sampled signals
  and images.
\newblock {\em SIAM Journal on Numerical Analysis}, 43(1):259--279, 2005.

\bibitem{archibald2008determining}
R.~Archibald, A.~Gelb, and J.~Yoon.
\newblock Determining the locations and discontinuities in the derivatives of
  functions.
\newblock {\em Applied Numerical Mathematics}, 58(5):577--592, 2008.

\bibitem{ascher1995implicit}
U.~M. Ascher, S.~J. Ruuth, and B.~T. Wetton.
\newblock Implicit-explicit methods for time-dependent partial differential
  equations.
\newblock {\em SIAM Journal on Numerical Analysis}, 32(3):797--823, 1995.

\bibitem{barter2010shock}
G.~E. Barter and D.~L. Darmofal.
\newblock Shock capturing with pde-based artificial viscosity for dgfem: Part
  i. formulation.
\newblock {\em Journal of Computational Physics}, 229(5):1810--1827, 2010.

\bibitem{bassi1995accurate}
F.~Bassi and S.~Rebay.
\newblock Accurate 2d euler computations by means of a high order discontinuous
  finite element method.
\newblock In {\em Fourteenth International Conference on Numerical Methods in
  Fluid Dynamics}, pages 234--240. Springer, 1995.

\bibitem{baumann1999discontinuous}
C.~E. Baumann and J.~T. Oden.
\newblock A discontinuous hp finite element method for convection—diffusion
  problems.
\newblock {\em Computer Methods in Applied Mechanics and Engineering},
  175(3-4):311--341, 1999.

\bibitem{bochner1929sturm}
S.~Bochner.
\newblock {\"U}ber sturm-liouvillesche polynomsysteme.
\newblock {\em Mathematische Zeitschrift}, 29(1):730--736, 1929.

\bibitem{boyd2001chebyshev}
J.~P. Boyd.
\newblock {\em Chebyshev and Fourier spectral methods}.
\newblock Courier Corporation, 2001.

\bibitem{burbeau2001problem}
A.~Burbeau, P.~Sagaut, and C.-H. Bruneau.
\newblock A problem-independent limiter for high-order runge--kutta
  discontinuous galerkin methods.
\newblock {\em Journal of Computational Physics}, 169(1):111--150, 2001.

\bibitem{cockburn1999discontinuous}
B.~Cockburn.
\newblock Discontinuous galerkin methods for convection-dominated problems.
\newblock In {\em High-order methods for computational physics}, pages 69--224.
  Springer, 1999.

\bibitem{cockburn1990runge}
B.~Cockburn, S.~Hou, and C.-W. Shu.
\newblock The runge-kutta local projection discontinuous galerkin finite
  element method for conservation laws. iv. the multidimensional case.
\newblock {\em Mathematics of Computation}, 54(190):545--581, 1990.

\bibitem{cockburn1989tvb2}
B.~Cockburn, S.-Y. Lin, and C.-W. Shu.
\newblock Tvb runge-kutta local projection discontinuous galerkin finite
  element method for conservation laws iii: one-dimensional systems.
\newblock {\em Journal of Computational Physics}, 84(1):90--113, 1989.

\bibitem{cockburn1989tvb}
B.~Cockburn and C.-W. Shu.
\newblock Tvb runge-kutta local projection discontinuous galerkin finite
  element method for conservation laws. ii. general framework.
\newblock {\em Mathematics of computation}, 52(186):411--435, 1989.

\bibitem{cockburn1991runge}
B.~Cockburn and C.-W. Shu.
\newblock The runge-kutta local projection $p^1$-discontinuous-galerkin finite
  element method for scalar conservation laws.
\newblock {\em ESAIM: Mathematical Modelling and Numerical Analysis},
  25(3):337--361, 1991.

\bibitem{cockburn1998local}
B.~Cockburn and C.-W. Shu.
\newblock The local discontinuous galerkin method for time-dependent
  convection-diffusion systems.
\newblock {\em SIAM Journal on Numerical Analysis}, 35(6):2440--2463, 1998.

\bibitem{cockburn1998runge}
B.~Cockburn and C.-W. Shu.
\newblock The runge--kutta discontinuous galerkin method for conservation laws
  v: multidimensional systems.
\newblock {\em Journal of Computational Physics}, 141(2):199--224, 1998.

\bibitem{dervieux2003theoretical}
A.~Dervieux, D.~Leservoisier, P.-L. George, and Y.~Coudi{\`e}re.
\newblock About theoretical and practical impact of mesh adaptation on
  approximation of functions and pde solutions.
\newblock {\em International journal for numerical methods in fluids},
  43(5):507--516, 2003.

\bibitem{fauconnier2011spectral}
D.~Fauconnier and E.~Dick.
\newblock On the spectral and conservation properties of nonlinear
  discretization operators.
\newblock {\em Journal of computational physics}, 230(12):4488--4518, 2011.

\bibitem{feistauer2007robust}
M.~Feistauer and V.~Ku{\v{c}}era.
\newblock On a robust discontinuous galerkin technique for the solution of
  compressible flow.
\newblock {\em Journal of Computational Physics}, 224(1):208--221, 2007.

\bibitem{gassner2013skew}
G.~J. Gassner.
\newblock A skew-symmetric discontinuous galerkin spectral element
  discretization and its relation to sbp-sat finite difference methods.
\newblock {\em SIAM Journal on Scientific Computing}, 35(3):A1233--A1253, 2013.

\bibitem{gelb2008detection}
A.~Gelb and D.~Cates.
\newblock Detection of edges in spectral data iii—refinement of the
  concentration method.
\newblock {\em Journal of Scientific Computing}, 36(1):1--43, 2008.

\bibitem{gelb1999detection}
A.~Gelb and E.~Tadmor.
\newblock Detection of edges in spectral data.
\newblock {\em Applied and computational harmonic analysis}, 7(1):101--135,
  1999.

\bibitem{gelb2000detection}
A.~Gelb and E.~Tadmor.
\newblock Detection of edges in spectral data ii. nonlinear enhancement.
\newblock {\em SIAM Journal on Numerical Analysis}, 38(4):1389--1408, 2000.

\bibitem{gelb2006adaptive}
A.~Gelb and E.~Tadmor.
\newblock Adaptive edge detectors for piecewise smooth data based on the minmod
  limiter.
\newblock {\em Journal of Scientific Computing}, 28(2):279--306, 2006.

\bibitem{gelb2006robust}
A.~Gelb and J.~Tanner.
\newblock Robust reprojection methods for the resolution of the gibbs
  phenomenon.
\newblock {\em Applied and Computational Harmonic Analysis}, 20(1):3--25, 2006.

\bibitem{giraldo2002nodal}
F.~X. Giraldo, J.~S. Hesthaven, and T.~Warburton.
\newblock Nodal high-order discontinuous galerkin methods for the spherical
  shallow water equations.
\newblock {\em Journal of Computational Physics}, 181(2):499--525, 2002.

\bibitem{glaubitz2017novel}
J.~Glaubitz and P.~{\"O}ffner.
\newblock A novel discontinuous galerkin method using the principle of discrete
  least squares.
\newblock {\em MPIM preprint series: 2017-63}, 2017.
\newblock Submitted.

\bibitem{glaubitz2016application}
J.~Glaubitz, P.~{\"O}ffner, and T.~Sonar.
\newblock Application of modal filtering to a spectral difference method.
\newblock {\em arXiv preprint arXiv:1604.00929}, 2016.

\bibitem{godlewski1991hyperbolic}
E.~Godlewski and P.~Raviart.
\newblock {\em Hyperbolic systems of conservation laws}.
\newblock Math{\'e}matiques \& applications. Ellipses, 1991.

\bibitem{gottlieb2001spectral}
D.~Gottlieb and J.~S. Hesthaven.
\newblock Spectral methods for hyperbolic problems.
\newblock {\em Journal of Computational and Applied Mathematics},
  128(1):83--131, 2001.

\bibitem{gottlieb2011strong}
S.~Gottlieb, D.~I. Ketcheson, and C.-W. Shu.
\newblock {\em Strong stability preserving Runge-Kutta and multistep time
  discretizations}.
\newblock World Scientific, 2011.

\bibitem{gottlieb1998total}
S.~Gottlieb and C.-W. Shu.
\newblock Total variation diminishing runge-kutta schemes.
\newblock {\em Mathematics of computation of the American Mathematical
  Society}, 67(221):73--85, 1998.

\bibitem{gottlieb2001strong}
S.~Gottlieb, C.-W. Shu, and E.~Tadmor.
\newblock Strong stability-preserving high-order time discretization methods.
\newblock {\em SIAM review}, 43(1):89--112, 2001.

\bibitem{guermond2008entropy}
J.-L. Guermond and R.~Pasquetti.
\newblock Entropy-based nonlinear viscosity for fourier approximations of
  conservation laws.
\newblock {\em Comptes Rendus Mathematique}, 346(13-14):801--806, 2008.

\bibitem{harten1983symmetric}
A.~Harten.
\newblock On the symmetric form of systems of conservation laws with entropy.
\newblock {\em Journal of computational physics}, 49(1):151--164, 1983.

\bibitem{hartmann2006adaptive}
R.~Hartmann.
\newblock Adaptive discontinuous galerkin methods with shock-capturing for the
  compressible navier--stokes equations.
\newblock {\em International Journal for Numerical Methods in Fluids},
  51(9-10):1131--1156, 2006.

\bibitem{hesthaven2008filtering}
J.~Hesthaven and R.~Kirby.
\newblock Filtering in legendre spectral methods.
\newblock {\em Mathematics of Computation}, 77(263):1425--1452, 2008.

\bibitem{hesthaven2007nodal}
J.~S. Hesthaven and T.~Warburton.
\newblock {\em Nodal discontinuous Galerkin methods: algorithms, analysis, and
  applications}.
\newblock Springer Science \& Business Media, 2007.

\bibitem{hewitt1979gibbs}
E.~Hewitt and R.~E. Hewitt.
\newblock The gibbs-wilbraham phenomenon: an episode in fourier analysis.
\newblock {\em Archive for history of Exact Sciences}, 21(2):129--160, 1979.

\bibitem{hu1996absorbing}
F.~Q. Hu.
\newblock On absorbing boundary conditions for linearized euler equations by a
  perfectly matched layer.
\newblock {\em Journal of Computational Physics}, 129(1):201--219, 1996.

\bibitem{hughes1986new}
T.~J. Hughes, L.~Franca, and M.~Mallet.
\newblock A new finite element formulation for computational fluid dynamics: I.
  symmetric forms of the compressible euler and navier-stokes equations and the
  second law of thermodynamics.
\newblock {\em Computer Methods in Applied Mechanics and Engineering},
  54(2):223--234, 1986.

\bibitem{huynh2007flux}
H.~T. Huynh.
\newblock A flux reconstruction approach to high-order schemes including
  discontinuous galerkin methods.
\newblock {\em AIAA paper}, 4079:2007, 2007.

\bibitem{jaffre1995convergence}
J.~Jaffre, C.~Johnson, and A.~Szepessy.
\newblock Convergence of the discontinuous galerkin finite element method for
  hyperbolic conservation laws.
\newblock {\em Mathematical Models and Methods in Applied Sciences},
  5(03):367--386, 1995.

\bibitem{kanevsky2006idempotent}
A.~Kanevsky, M.~H. Carpenter, and J.~S. Hesthaven.
\newblock Idempotent filtering in spectral and spectral element methods.
\newblock {\em Journal of Computational Physics}, 220(1):41--58, 2006.

\bibitem{karniadakis2013spectral}
G.~Karniadakis and S.~Sherwin.
\newblock {\em Spectral/hp element methods for computational fluid dynamics}.
\newblock Oxford University Press, 2013.

\bibitem{ketcheson2008highly}
D.~I. Ketcheson.
\newblock Highly efficient strong stability-preserving runge--kutta methods
  with low-storage implementations.
\newblock {\em SIAM Journal on Scientific Computing}, 30(4):2113--2136, 2008.

\bibitem{klockner2011viscous}
A.~Kl{\"o}ckner, T.~Warburton, and J.~S. Hesthaven.
\newblock Viscous shock capturing in a time-explicit discontinuous galerkin
  method.
\newblock {\em Mathematical Modelling of Natural Phenomena}, 6(3):57--83, 2011.

\bibitem{kreiss1979stability}
H.-O. Kreiss and J.~Oliger.
\newblock Stability of the fourier method.
\newblock {\em SIAM Journal on Numerical Analysis}, 16(3):421--433, 1979.

\bibitem{kuzmin2006flux}
D.~Kuzmin, R.~L{\"o}hner, and S.~Turek.
\newblock {\em Flux-Corrected Transport: Principles, Algorithms, and
  Applications}.
\newblock Scientific Computation. Springer Berlin Heidelberg, 2006.

\bibitem{lax1973hyperbolic}
P.~D. Lax.
\newblock {\em Hyperbolic systems of conservation laws and the mathematical
  theory of shock waves}.
\newblock SIAM, 1973.

\bibitem{leveque2002finite}
R.~J. LeVeque.
\newblock {\em Finite volume methods for hyperbolic problems}, volume~31.
\newblock Cambridge university press, 2002.

\bibitem{levy1998semidiscrete}
D.~Levy and E.~Tadmor.
\newblock From semidiscrete to fully discrete: Stability of runge--kutta
  schemes by the energy method.
\newblock {\em SIAM review}, 40(1):40--73, 1998.

\bibitem{majda1978fourier}
A.~Majda, J.~McDonough, and S.~Osher.
\newblock The fourier method for nonsmooth initial data.
\newblock {\em Mathematics of Computation}, 32(144):1041--1081, 1978.

\bibitem{meister2012application}
A.~Meister, S.~Ortleb, and T.~Sonar.
\newblock Application of spectral filtering to discontinuous galerkin methods
  on triangulations.
\newblock {\em Numerical Methods for Partial Differential Equations},
  28(6):1840--1868, 2012.

\bibitem{meister2013extended}
A.~Meister, S.~Ortleb, T.~Sonar, and M.~Wirz.
\newblock An extended discontinuous galerkin and spectral difference method
  with modal filtering.
\newblock {\em ZAMM-Journal of Applied Mathematics and Mechanics/Zeitschrift
  f{\"u}r Angewandte Mathematik und Mechanik}, 93(6-7):459--464, 2013.

\bibitem{offner2017stability}
P.~{\"O}ffner, J.~Glaubitz, and H.~Ranocha.
\newblock Stability of correction procedure via reconstruction with
  summation-by-parts operators for burgers' equation using a polynomial chaos
  approach.
\newblock {\em arXiv preprint arXiv:1703.03561}, 2017.

\bibitem{offner2013detecting}
P.~{\"O}ffner, T.~Sonar, and M.~Wirz.
\newblock Detecting strength and location of jump discontinuities in numerical
  data.
\newblock {\em Applied Mathematics}, 4(12):1, 2013.

\bibitem{persson2006sub}
P.-O. Persson and J.~Peraire.
\newblock Sub-cell shock capturing for discontinuous galerkin methods.
\newblock In {\em 44th AIAA Aerospace Sciences Meeting and Exhibit}, page 112,
  2006.

\bibitem{pirozzoli2006spectral}
S.~Pirozzoli.
\newblock On the spectral properties of shock-capturing schemes.
\newblock {\em Journal of Computational Physics}, 219(2):489--497, 2006.

\bibitem{ranocha2016time}
H.~Ranocha, J.~Glaubitz, P.~{\"O}ffner, and T.~Sonar.
\newblock Time discretisation and $ l\_2 $ stability of polynomial
  summation-by-parts schemes with runge-kutta methods.
\newblock {\em arXiv preprint arXiv:1609.02393}, 2016.

\bibitem{ranocha2018stability}
H.~Ranocha, J.~Glaubitz, P.~{\"O}ffner, and T.~Sonar.
\newblock Stability of artificial dissipation and modal filtering for flux
  reconstruction schemes using summation-by-parts operators.
\newblock {\em Applied Numerical Mathematics}, 128:1--23, 02 2018.
\newblock See also \href{http://arxiv.org/abs/1606.00995}{arXiv:
  \texttt{1606.00995 [math.NA]}} and \href{http://arxiv.org/abs/1606.01056}{
  arXiv: \texttt{1606.01056 [math.NA]}}.

\bibitem{routh1884some}
E.~J. Routh.
\newblock On some properties of certain solutions of a differential equation of
  the second order.
\newblock {\em Proceedings of the London Mathematical Society}, 1(1):245--262,
  1884.

\bibitem{sheshadri2014shock}
A.~Sheshadri and A.~Jameson.
\newblock Shock detection and capturing methods for high order
  discontinuous-galerkin finite element methods.
\newblock In {\em 32nd AIAA Applied Aerodynamics Conference}, page 2688, 2014.

\bibitem{shu1989efficient}
C.-W. Shu and S.~Osher.
\newblock Efficient implementation of essentially non-oscillatory
  shock-capturing schemes, ii.
\newblock {\em Journal of Computational Physics}, 83(1):32--78, 1989.

\bibitem{sod1978survey}
G.~A. Sod.
\newblock A survey of several finite difference methods for systems of
  nonlinear hyperbolic conservation laws.
\newblock {\em Journal of computational physics}, 27(1):1--31, 1978.

\bibitem{tadmor2002adaptive}
E.~Tadmor and J.~Tanner.
\newblock Adaptive mollifiers for high resolution recovery of piecewise smooth
  data from its spectral information.
\newblock {\em Foundations of Computational Mathematics}, 2(2):155--189, 2002.

\bibitem{tanner2006optimal}
J.~Tanner.
\newblock Optimal filter and mollifier for piecewise smooth spectral data.
\newblock {\em Mathematics of computation}, 75(254):767--790, 2006.

\bibitem{vincent2011new}
P.~E. Vincent, P.~Castonguay, and A.~Jameson.
\newblock A new class of high-order energy stable flux reconstruction schemes.
\newblock {\em Journal of Scientific Computing}, 47(1):50--72, 2011.

\bibitem{vonneumann1950method}
J.~VonNeumann and R.~D. Richtmyer.
\newblock A method for the numerical calculation of hydrodynamic shocks.
\newblock {\em Journal of applied physics}, 21(3):232--237, 1950.

\bibitem{wheatley2011spectral}
V.~Wheatley, R.~Jeltsch, and H.~Kumar.
\newblock Spectral performance of rkdg methods.
\newblock {\em Mathematica balkanica}, 25(3):257--276, 2011.

\bibitem{yang1997spectral}
B.~Yang, D.~Gottlieb, and J.~S. Hesthaven.
\newblock Spectral simulations of electromagnetic wave scattering.
\newblock {\em Journal of Computational Physics}, 134(2):216--230, 1997.

\end{thebibliography}

\end{document}